\documentclass{amsart}
\usepackage[T1]{fontenc}
\usepackage{amsfonts}
\usepackage{mathtools,graphicx}
\usepackage{latexsym}
\usepackage[T1]{fontenc}
\usepackage{lmodern}
\usepackage{hyperref}
\usepackage{amsmath}
\usepackage{mathrsfs}
\usepackage{pstricks}
\usepackage{listings}
\usepackage{hyperref}
\usepackage{enumitem}
\usepackage{csvsimple}
\usepackage{todonotes}
\usepackage{amssymb,bm}
\usepackage{amsmath}
\usepackage{amsthm}
\usepackage{tikz}
\usepackage{tikz-cd}
\usepackage{epsfig}
\usepackage{verbatim}
\usepackage{float}
\usepackage{tabularx}
\usepackage{algorithmic}
\usepackage{amssymb}
\usepackage{graphicx}%
\setcounter{MaxMatrixCols}{30}
\providecommand{\U}[1]{\protect\rule{.1in}{.1in}}
\definecolor{codegray}{rgb}{0.5,0.5,0.5}
\lstdefinestyle{mystyle}{
commentstyle=\color{codegreen},
keywordstyle=\color{magenta},
numberstyle=\tiny\color{codegray},
stringstyle=\color{codepurple},
basicstyle=\ttfamily\footnotesize,
breakatwhitespace=false,
breaklines=true,
captionpos=b,
keepspaces=true,
numbers=left,
numbersep=5pt,
showspaces=false,
showstringspaces=false,
showtabs=false,
tabsize=2
}
\lstdefinelanguage{GAP}{
basicstyle=\ttfamily,
keywords={true, false, function, return, fail, if, in, while, do, od, else, elif, fi, break, continue},
keywordstyle=\color{blue}\bfseries,
otherkeywords={      >, <, ==
},
breaklines=true,
identifierstyle=\color{black},
sensitive=True,
comment=[l]{\#},
commentstyle=\color{cyan},
stringstyle=\color{red},
morestring=[b]',
morestring=[b]"
}
\lstset{style=mystyle}
\providecommand{\U}[1]{\protect\rule{.1in}{.1in}}

\setlength{\textwidth}{15truecm}
\setlength{\textheight}{21truecm}
\setlength{\oddsidemargin}{.65truecm}
\setlength{\evensidemargin}{.25truecm}
\setlength{\marginparwidth}{2.4truecm}
\setlength{\topmargin}{-.15in}
\newcolumntype{Y}{>{\raggedleft\arraybackslash}X}

\def\vs{\vskip.3cm}
\def\noi{\noindent}

\newtheorem{theorem}{Theorem}

\newtheorem{corollary}[theorem]{Corollary}

\newtheorem{definition}[theorem]{Definition}

\newtheorem{lemma}[theorem]{Lemma}

\newtheorem{proposition}[theorem]{Proposition}

\def\vs{\vskip.3cm}

\def\vs{\vskip.3cm}
\def\noi{\noindent}

\def\cV{\mathcal V}

\def\id{\text{\rm Id}}

\begin{document}
\title[Nonlinear Dynamics of Shallow Neural Networks]{Nonlinear Dynamics In Optimization Landscape of Shallow Neural Networks with Tunable Leaky ReLU}
\author[J. Liu]{Jingzhou Liu}
\address{Department of Mathematical Sciences the University of Texas at Dallas
Richardson, 75080 USA}
\email{Jingzhou.Liu@UTDallas.edu}
\begin{abstract}
In this work, we study the nonlinear dynamics of a shallow neural network trained with mean-squared loss and leaky ReLU activation. Under Gaussian inputs and equal layer width $k$, $(1)$ we establish, based on the equivariant gradient degree, a theoretical framework, applicable to \textbf{any number of neurons} $\mathbf{k\ge 4}$,  to detect bifurcation of critical points with associated symmetries from global minimum as leaky parameter $\alpha$ varies. Typically, our analysis reveals that a multi-mode degeneracy consistently occurs at the critical number $0$, independent of $k$. $(2)$ As a by-product, we further show that such bifurcations are width-independent, arise only for nonnegative $\alpha$ and that the global minimum undergoes no further symmetry-breaking instability throughout the engineering regime
 $\alpha\in (0,1).$  An explicit example with $k=5$ is presented to illustrate the framework and exhibit the resulting bifurcation together with their symmetries.
\end{abstract}
\maketitle
\noi \textbf{Mathematics Subject Classification:} Primary: 37G40, 37N40, 68T07, 90C26, Secondary: 37C20, 35B32, 55M20 

\medskip

\noi \textbf{Key Words and Phrases:} Leaky ReLU; bifurcation with symmetries; equivariant degree; neural network; optimization landscape.
\section{Introduction}
The optimization landscape of neural networks exhibits a rich structure shaped by high-dimensional nonconvexity and in many cases, intrinsic symmetry. Two-layer teacher–student architectures, widely regarded as a canonical framework for understanding such optimization, provide simplified yet representative settings for rigorous theoretical analysis \cite{du2018gradient, soltanolkotabi2018theoretical, safran2021effects, akiyama2021learnability}. More specifically, the teacher network is fixed, pre-trained that serves as the ground truth while the student network is trained to approximate the teacher’s output by minimizing a loss function. 
A prototypical example is the two-layer fully connected network with ReLU activation, whose loss landscape exhibits numerous spurious minima. Due to permutation invariance of neurons, these local minima can be classified into families of symmetry-related critical points, for which explicit analytical expressions can be provided \cite{safr, symmetry2021}. Recent studies further show that, as the number of neurons $k$ varies, certain families approach zero loss as $k$ increases, while others collapse into saddle points \cite{arjevani2021analytic, arjevani2022annihilation}. Despite the rich analytical understanding of critical points in the static setting, the dynamics of the loss landscape under varying activations has not been systematically understood. In this work, we adopt the Leaky ReLU as activation and employ the topological method of equivariant gradient degree to characterize such behaviors.
\vskip0.3cm
\noi By following the setting of \cite{ symmetry2021}, we consider a two-layer teacher–student network trained under the mean-squared loss, where both the input and hidden layers have width $k$, and the teacher model is given by vectorized identity matrix. More precisely, let $x\in \mathbb{R}^k$ be the input of the neural network, where $x$ is sampled from a Gaussian distribution 
$\mathcal N(0,\mathbb I_k)$ and the leaky ReLU activation $\sigma_{\alpha}:\mathbb{R}\to\mathbb{R}$ given by 
\[
\sigma_{\alpha}(a)=\max\{(1-\alpha)a,a\},\;\;\;\alpha\in \mathbb R.
\] 
\noi It is worth noting that in practical engineering regime, $\alpha$ is typically chosen within the interval $(0,1)$ due to its empirical performance and reduces to linear activation when $\alpha=0$. \vs 
Consider a student network with a single hidden layer of $k$ neurons, denoted by
\[
u=(u_1,u_2,\cdots, u_k)^T,
\]
where each $u_i \in \mathbb{R}^k$ for $\;i\in \{1,\cdots,k\} $ represents the linear functional applied to the input $x\in \mathbb{R}^k$ in the $i$-th neuron. Let $v^o$ be the pre-trained weights in the teacher network 
and takes the form
\[
v^o = \sum_{i=1}^{k} e_i \otimes e_i \in \mathbb{R}^{k^2},
\]
where $e_i$ denotes the $i$-th standard basis vector in $\mathbb{R}^k$. Then the optimal solution for $u$ is obtained by minimizing the MSE loss function $\mathcal F_{\alpha}:\mathbb{R}^{k^2}\to \mathbb R,$
\begin{equation}\label{eq:Loss}
    \mathcal{F}_{\alpha}(u):=\frac{1}{2}\mathbb{E}_{x\sim \mathcal N(0,\mathbb{I}_k)}\Big(\sum_{i=1}^{k}\sigma_{\alpha}({u_i}^T x)-\sum_{i=1}^{ k}\sigma_{\alpha}({v_i}^T x) \Big)^2,
\end{equation}
which can be explicitly represented as
\begin{equation} \label{eq:explicit_l}
   \mathcal{F}_{\alpha}(u)=\sum_{i,j=1}^{k}\Big(\frac{1}{2}f_{\alpha}(u_i, u_j)-f_{\alpha}(u_i, v_j)+\frac{1}{2}f_{\alpha}(v_i,v_j)\Big),
\end{equation}
where $f_{\alpha}:\mathbb R^k\times \mathbb R^k \to \mathbb R$ is given by
    \begin{align} \label{eq: explicit_f}
        f_{\alpha}(w,v)=\frac{1}{2\pi}\|w\|\|v\|\Big(\alpha^2(\sin\theta-\theta\cos\theta)+(2+\alpha^2-2\alpha)\pi\cos\theta\Big), \;\; \theta=\cos^{-1}\frac
        {w\cdot v}{\|w\|\| v\|}.
    \end{align}
(One is referred to \cite{symmetry2021} Proposition 4.3 for results and direct derivation of equation \eqref{eq:explicit_l} and \eqref{eq: explicit_f}, and to \ref{app:explicit_form} for supplementary details.)\vs
In the setting considered, the system possesses intrinsic symmetries.
More precisely, on the space $\mathbb{R}^{k^2},$ one can define an orthogonal action of the group
\[
G:=S_k\times S_k,
\]
where the first $S_k$ acts by permuting the components $u_i$ while second $S_k$ denotes permutation within each $u_i.$ Explicitly, for $(\sigma,\gamma)\in G,$ the action of $G$ on $\mathbb R^{k^2}$ is given by
\[
(\sigma,\gamma)(u_1,u_2,\cdots,u_k)^T=(
\gamma u_{\sigma(1)}, \gamma u_{\sigma(2)}, \cdots, \gamma u_{\sigma(k)})^T.
\]
It is easy to observe that $\mathcal F_{\alpha}$ is $G$-invariant (see \cite{symmetry2021} Lemma 4.2-Example 4.8 for more details on the proof).\vs 
Let 
\[
\Omega:=\{u\in \mathbb R^{k^2}:u_i\neq 0,i=1,…,k\},
\]
Then $\nabla_{u}\mathcal F_{\alpha}$ is differentiable on $\Omega$ (see \cite{symmetry2021} Lemma 4.9) and notice that equation \eqref
{eq: system} admits trivial solution $v^o$, which represents the global minima of $\mathcal F_{\alpha}.$
The purpose of this work is to discuss solutions to 
\begin{equation}\label{eq: system}
\nabla_{u}\mathcal{F}_{\alpha}(u)=0.
\end{equation}
More precisely, by taking into account the symmetry $G$, we exam the branches of critical points of $\mathcal F_{\alpha}(u)$ and their symmetries emerging from the target vector $v^o$ as $\alpha$ varies.  
\vskip 0.3cm
In this work, we employ the equivariant gradient degree, originally introduced by K. Gęba \cite{gkeba1997degree}, as a tool to locate critical points of \eqref{eq: system} in a neighborhood of the orbit of global minima. This theoretic framework generalizes the classical Brouwer and Leray–Schauder degrees to gradient maps respecting group symmetries, and has been applied in a variety of symmetric variational problems (see, e.g., \cite{dabkowski2017multiple,liu2025nonlinear,liu2023existence,garcia2025global,gkeba2008some} and references therein). For completeness, we summarize the core ideas below.\vs
\noi Given a compact Lie group $G$, a $G$-invariant map $\varphi_{\tilde{\alpha}}$ and a neighborhood $\mathcal U$ of $G$-orbit of equilibrium $v^0$, the $G$-equivariant gradient degree 
$\nabla_{G}\text{\textrm{-deg}}\Big(\nabla \varphi_{\tilde{\alpha}}
,\mathcal U\Big)$ is a well-defined element of the Euler ring $U(G)=\mathbb Z[\Phi(G)].$ Here, 
$\Phi(G)$ denotes the set of conjugacy classes $(H)$ of closed subgroups $H\leq G$. Thus, $\mathbb Z[\Phi(G)]$ is the free $\mathbb Z$-module generated by these classes. Then the degree can be written as
\[
\nabla_{G}\text{\textrm{-deg}}\Big(\nabla \varphi_{\tilde{\alpha}}
,\mathcal U\Big)=n_1(H_1)+n_2(H_2)+\cdots n_k(H_k),\;\; n_i\in \mathbb Z
\]
where $(H_i)$ represents an orbit type  in $\mathcal U.$ The corresponding equivariant topological invariant at a critical value $\tilde{\alpha}_o$ is defined by
\[
\omega_G(\tilde\alpha_0)
:= \nabla_G\text{-deg}\!\big(\nabla \varphi_{(\tilde\alpha_0)_-},\, \mathcal U\big)
 - \nabla_G\text{-deg}\!\big(\nabla \varphi_{(\tilde\alpha_0)_+},\, \mathcal U\big),
\]
and takes the form
\[
\omega_G(\tilde{\alpha}_o)=r_1(K_1)+r_2(K_2)+\cdots +r_m(K_m),\;\;\; r_i\in \mathbb Z
\]
This invariant provides full classifications of solutions bifurcating from the equilibrium when $\tilde{\alpha}$ crosses $\tilde{\alpha}_o.$ For each nonzero coefficient $r_i,$ a global family of solutions emerges, with symmetry of at least $K_i$. It is worthy to note that our method provides alternative to other tools such as equivariant singularity, Lyapunov–Schmidt reduction and center manifold theory for studying bifurcation, and it is among  many of other degrees such as primary degree, twisted degree, etc., which are all closely related to one another. See\cite{balanov2025degree, balanov2006applied, hooton2017noninvasive, liu2023existence, eze2022subharmonic, balanov2010periodic, ize2003equivariant,ruan2008applications, rybicki1997applications, dabkowski2017twisted,garcia2011global} for details of those degrees and some of the applications. One is also refered to Appendix \ref{app:C} for some essential properties of equivariant gradient degree.\vs 
Our main result, obtained through the application of the equivariant gradient degree, is stated in Theorem \eqref{thm:main}. 
It shows that: for any width $k \ge 4$ of the input and hidden layers, \textit{the system consistently undergoes bifurcations at three critical numbers, and their symmetries are associated with maximal orbit types in the following four $S_k$ irreducible representations: $S^{(k)}$, $S^{(k-1,1)}$, $S^{(k-2,2)}$, $S^{(k-2,1,1)}$, i.e. the trivial, standard, symmetric square and exterior square representation, respectively.}\textit{ In particular, at critical number $0$, the zero eigenvalue occurs simultaneously across three isotypic components, leading to multi-mode degeneracy and richer bifurcation structures.} \textit{For a concrete example when $k=5,$ there are at least four distinct symmetry types of maximal orbit kinds associated with bifurcating branches detected. We also observe that the bifurcation occurs exclusively for nonnegative $\alpha$, and the critical numbers are independent of the network width $k$. Moreover, both the nonzero critical numbers converge to $2$ as $k$ goes to infinity, indicating a width–invariant and asymptotically universal mechanism governing symmetry breaking in wide shallow networks.} These classifications reflect the equivariant bifurcation structure of the gradient flow dynamics and its implications for symmetry breaking in nonconvex neural network optimization.\vs

\noindent The remainder of the paper is structured as follows. Section \ref{sec: framework} introduces the mathematical model for fully-connected two-layer teacher-student neural network. Subsection \ref{sec：2.1} restates the explicit form of loss function and its gradient for future use. Subsection \ref{sec:2.2} analyzes the isotropy group $\triangle S_k$ of global minima and the general $S_k$ isotypic decomposition of $\mathbb R^{k^2}$ for any number of neurons $k\ge 4.$ In Section \ref{sec:3}, we restate the general form of Hessian (Section \ref{sec:3.1}) and compute its spectrum at $v^o$ (Section \ref{sec:3.2}). The theoretical computation of gradient degree, including the main result \eqref{thm:main} and its proof, are presented in Section \ref{sec:4}. We then show a concrete example where $k=5$ in Section \ref{sec:5}. For the reader’s convenience, we also collect the derivations of the loss and its gradient, as well as the properties of the Euler ring and the equivariant gradient degree in Appendices \ref{app:A}, \ref{app:A2}, \ref{app:B}, and \ref{app:C}, respectively.  \vs 
\section{Mathematical Framework}\label{sec: framework}
\subsection{Loss Function and Its Gradient.}\label{sec：2.1}
Let $V:=\mathbb R^{k^2}$ and consider k neurons $u:=(u_1,u_2,\cdots,u_k)^T\in V$ where $u_i\in \mathbb R^k,\; i=1,\cdots,k$ 
and
\[
\Omega:=\{u\in V:u_{i}
\not =0\}.
\]
The loss function $\mathcal F_{\alpha}:\Omega\to \mathbb R$ takes the form in \eqref{eq:explicit_l} and \eqref{eq: explicit_f}. The goal of this work is to explore solutions to 
$\nabla_u \mathcal F_{\alpha}(u)=0.$
The explicit form of gradient $\nabla_u \mathcal F_{\alpha}: \Omega\to \mathbb R^{k^2}$ is given by
\begin{equation}\label{eq:lg}
\nabla_{u}\mathcal F_{\alpha}(u)=
{\begin{bmatrix}
   \nabla_{u_1}\mathcal 
 F_{\alpha}(u),\nabla_{u_2}\mathcal F_{\alpha}(u),\cdots, \nabla_{u_k}\mathcal F_{\alpha}(u) 
\end{bmatrix}}^T,
\end{equation}
where 
\begin{align}\label{eq:explicit_lg}
\nabla_{u_i}\mathcal F(u)=\frac{\alpha}{2\pi}\sum_{j=1}^{k}\Big(\frac{\|u_j\|\sin\theta_{ij}}{\|u_i\|}u_i-\theta_{ij}u_j\Big)
-\frac{\alpha}{2\pi}\sum_{j=1}^{k}\Big(\frac{\sin\tilde{\theta}_{ij}}{\|u_i\|}u_i-\tilde{\theta}_{ij}v_j\Big)+\frac{1}{2}\sum_{j=1}^k\Big(u_j-v_j\Big), 
\end{align}
$({\theta}_{ij}=\cos^{-1}\frac{u_i\cdot u_j}{\|u_i\|\|u_j\|}\text{ and } \tilde{\theta}_{ij}=\cos^{-1}\frac{u_i\cdot v_j}{\|u_i\|\|v_j\|})$\vskip0.3cm
\noi see Appendix \eqref{ap:derivation_gls} and \cite{symmetry2021} Proposition 4.11 for more details about the derivation of equation \eqref{eq:explicit_lg}.\vs
\subsection{Symmetries and Isotypic Decomposition.} \label{sec:2.2} 
Recall that \(V=\mathbb{R}^{k^2}\) is a representation of
$G:=S_k\times S_k,$
with action
\[
(\sigma,\gamma)\,(u_1,\dots,u_k)^\top
=\big(\,\gamma\,u_{\sigma(1)},\,\gamma\,u_{\sigma(2)},\,\dots,\,\gamma\,u_{\sigma(k)}\big)^\top,
\]
where each  \(u_i\in\mathbb{R}^k\).  
Let \(v^o\) be the global minima of \eqref{eq: system} and its isotropy group of \(v^o\) is  given by
\[
G_{v^o}=\triangle S_k:=\{(\sigma,\sigma): \sigma\in S_k\}.
\]
We then have the following observation for later application of the Slice Principle (see Appendix~\ref{app:C}): 
Given the $G$ orbit of $v^o,$ the tangent space to the orbit at $v^o$, denoted $T_{v^o}G(v^o)$, can be obtained from the discreteness of the group $G.$ i.e. $T_{v^o}G(v^o)=\{0\}.$ Therefore, the slice at $v^o$ is given by \[ S_o:=\{u\in V: u\cdot T_{v^o}G(v^o)=0\}=V. \] Since $v^o$ has isotropy $\triangle S_k\cong S_k$, $S_o$ is a $S_k$ orthogonal representation. The purpose of the following is to obtain the general $S_k$ isotypic decomposition of the slice $S_o$ for any $k\in \mathbb N^+,\; k\ge2$.\vskip0.3cm
Let \(V_o:=\operatorname{span}\{\mathbf 1\},\) where $\mathbf 1=(1,\cdots,1)^T\in \mathbb R^k$ denotes the trivial $S_k$ representation and \(V_\perp:=\{x\in\mathbb R^k:\mathbf 1^\top x=0\}\) the standard $S_k$ representation. Then for any integer $k\ge 2,$ one has
$\mathbb R^k\cong V_o\oplus V_{\perp}$ and 
\begin{align}\label{eq:V_dcom}
V:=\mathbb R^{k^2}&\cong({\mathbb R^k})^{\otimes 2}=(V_o\oplus V_{\perp})\otimes (V_o\oplus V_{\perp})\notag\\
&=(V_o\otimes V_o)\oplus (V_o\otimes V_{\perp})
\oplus (V_{\perp}\otimes V_o)\oplus (V_{\perp}\otimes V_{\perp})\notag\\
&=(V_o\otimes V_o)\oplus (V_o\otimes V_{\perp})
\oplus (V_{\perp}\otimes V_o)\oplus \Big(\operatorname{Sym}^2(V_{\perp}) \;\oplus\; \wedge^2(V_{\perp})\Big)\notag,
\end{align}
where 
\begin{equation}\label{eq:2}
\operatorname{Sym}^2(V_{\perp})=\{U\in V_{\perp}^{\otimes 2}:U=sU,\; \text{for } s\in S_2\} \qquad
  \wedge^2(V_{\perp})=\{U\in V_{\perp}^{\otimes 2}:U=-sU,\; \text{for } s\in S_2\},  
\end{equation}
and is called \textit{second symmetric power} and \textit{exterior power}, respectively. (see \cite{reprbook} Section 2.11 for definition and \cite{fulton2013representation} Chapter 4.1 for derivation of $V_{\perp}\otimes V_{\perp}$.) 
\vskip 0.3cm
We next borrow the concepts from Young diagrams and Frobenius's Character Formula \eqref{eq:frobenius} to derive the general form of $S_k$ isotypic decomposition of $V:=\mathbb R^{k^2}.$ Let $\eta=(\eta_1,\eta_2,\cdots, \eta_r)$ be a partition of $k$, represented by a Young diagram whose rows have lengths \[
\eta_1\ge \eta_2\ge\cdots\ge \eta_r\ge 0,\; \sum_i \eta_i=k.
\]
Each such partition labels a unique irreducible representation $S^{\eta}$ of $S_k,$ and is known as \textit{Specht Module.} It is well-known fact that $V_{\perp}\cong S^{(k-1,1)}$ and from Frobenius's Character Formula \eqref{eq:frobenius}, one has for any $k\in\mathbb N^+,\; k\ge 4$ that
\begin{align}\label{eq:symwedge}
&\operatorname{Sym}^2(V_{\perp})\cong S^{(k)}\oplus S^{(k-1,1)}\oplus S^{(k-2,2)}\\
&\wedge^2(V_{\perp})\cong S^{(k-2,1,1)}.\notag 
\end{align}
Therefore, we derive the following general form for $S_k$-isotypic decomposition of $V$:
\begin{equation}\label{eq: decomposition}
V\cong2 S^{(k)}\oplus 3 S^{(k-1,1)}\oplus S^{(k-2,2)}\oplus S^{(k-2,1,1)},\qquad  k\ge 4.
\end{equation}
     
\noindent Notice that here $S^{(k)}, S^{(k-1,1)}$  are the trivial and standard representation, respectively. By Hook Length Formula (see \cite{reprbook} Chapter 5.17), one can also obtain
\begin{align*}
&\dim S^{(k-2,2)}=k(k-3)/2,\\
&\dim S^{(k-2,1,1)}=(k-1)(k-2)/2.
\end{align*}
We list the detailed derivation of decomposition \eqref{eq: decomposition} in Appendix \ref{app:A2}, for more thorough understanding, one is referred to \cite{fulton2013representation} Chapter 4.1. 
\section{Hessian and 
Its Spectrum at Global Minima}\label{sec:3}
\subsection{General Form of Hessian.}\label{sec:3.1} Let $x,y\in \mathbb R^k$ be two non-parallel vectors, denote by $\theta_{xy}\in (0,\pi)$ the angle between them and $\hat{x}=\frac{x}{\|x\|},\; \hat{y}=\frac{y}{\|y\|}.$ Define
\[
n_{xy}=\hat{x}-\cos\theta_{xy}\hat{y},\quad \hat{n}_{xy}=\frac{n_{xy}}{\|n_{xy}\|}.
\]
Note, $\hat{n}_{xy}=0$ if $x,y$ are non-zero but paralleled vectors. Now let $\mathbb I_k$ be the $k\times k$ identity matrix, and define 
\begin{align*}
h_1(x,y)&:=\frac{\sin\theta_{xy}\|y\|}{2\pi\|x\|}\Big(\mathbb I_k-\frac{xx^T}{\|x\|^2}+\hat{n}_{yx}\hat{n}_{yx}^T\Big)\\
h_2(x,y)&:=\frac{1}{2\pi}\Big(-\theta_{xy}\mathbb I_k+\frac{\hat{n}_{xy}y^T}{\|y\|}+\frac{\hat{n}_{yx}x^T}{\|x\|}\Big).
\end{align*}
Then one has the Hessian $\mathcal A_{\alpha}:\Omega\to \mathbb R^{k\times k}$ given by
\[
\mathcal A_{\alpha}(u):=\nabla^2_{u}\mathcal F_{\alpha}(u)=
\begin{bmatrix}
    A_{11}(u)&\cdots&A_{1k}(u)\\
    \vdots&&\vdots\\
    A_{k1}(u)&\cdots&A_{kk}(u)
\end{bmatrix},
\]
where each $A_{ij}(u), \; i,j\in \{1,\cdots,k\}$ is a $k\times k$ block matrix and $A_{ij}(u)=A^T_{ji}(u).$ In particular,
\begin{align}\label{eq:hessian}
A_{ii}(u)&=\frac{1}{2}\mathbb I_k+\sum_{j=1}^k \alpha\Big(h_1(u_i,u_j)-h_1(u_i,v_j)\Big)\notag\\   
A_{ij}(u)&=\frac{1}{2}\mathbb I_k+\alpha h_2(u_i,u_j),\;\; i\neq j
\end{align}
For the detailed derivation of the Hessian, see \ref{app:hessian} and \cite{Hessian2020} Appendix C.\vs
Moreover, at the global minimum $v^o,$ one has $\mathcal A_{\alpha}(v^o):\Omega \to  \Omega,$ and 
\begin{align}\label{eq:spe_I}
A_{ii}( v^o)&=\frac{1}{2}\mathbb{I}_k, \;\;i\in\{1,\cdots,k\}\notag\\
A_{ij}( v^o)&=\frac{1}{2}\mathbb I_k-\frac{\alpha}{4}\mathbb I_k+\frac{\alpha}{2\pi}(E_{ij}+E_{ji}),\;\; i\neq j,
\end{align}
where $E_{ij}=e_i{e_j}^T,\; E_{ji}=e_j{e_i}^T.$\vskip 0.3cm
\noindent \subsection{ Spectrum of Hessian at Global Minima.}\label{sec:3.2} Let $U=[u_1,u_2,\cdots, u_k]$ be $k\times k$ matrix where $u_j\in \mathbb R^k,\; j=1,\cdots,k.$ Define block matrix operator $\mathcal L_{\alpha}:\mathbb R^{k\times k}\to \mathbb R^{k\times k}$ given by
\[
(\mathcal L_{\alpha}U)_i:=\sum_{j=1}^kA_{ij}u_j,\quad i=1,\cdots,k.
\]
Notice, the block operator $\mathcal L_{\alpha}$ is the Hessian $\mathcal A_{\alpha}(v^o)$ written in block form. 
In particular, eigenvectors of $\mathcal A_{\alpha}$ correspond to block eigenmatrices of $\mathcal L_{\alpha}$. Therefore, one has 
\begin{equation}\label{eq:specturm}
    \mathcal L_{\alpha}(U)=U\Big(aJ+b\mathbb I_k\Big)+c\Big(U^T+\operatorname{tr}(U)\mathbb I_k-2\operatorname{Diag}(U)\Big),
\end{equation}
where $a=\frac{1}{2}-\frac{\alpha}{4},\; b=\frac{\alpha}{4},\;c=\frac{\alpha}{2\pi},$ and $J=\mathbf{1}\mathbf{1}^T,$  where $\mathbf{1}=(1,\cdots,1)^T\in\mathbb R^k.$ $\operatorname{tr}(U)$ denotes the trace of matrix $U.$ Given element 
$u_{ij}$ of matrix $U$, define the operator $\operatorname{Diag}:\mathbb R^{k\times k} \to \mathbb R^{k\times k}$ by 
 $\operatorname{Diag}(U)_{ij}=\begin{cases}
    u_{ii},\; i=j\\
    0,\quad i\neq j.
\end{cases}$ One is referred to Appendix \eqref{ap:spectrum} for more details of the derivation. \vskip0.3cm

Recall that equation \eqref{eq:2} can be equivalently expressed as
\[
\operatorname{Sym^2}(V_{\perp})=\{U\in V_{\perp}^{\otimes 2}:U^T=U\},\;\;\wedge^2(V_{\perp})=\{U\in V_{\perp}^{\otimes 2}:U^T=-U\}.
\]
Consider the $S_k$–equivariant map 
\[
\operatorname{diag}:\operatorname{Sym^2}(V_{\perp})\to \mathbb R^k\cong V_o\oplus V_\perp,\
\]
which sends symmetric matrix to its diagonal vector. Define
\[
P_{\perp}:=\mathbb I_k-\frac{1}{k}J.
\]
Notice that the operator $P_{\perp}$ is an orthogonal projection of $\mathbb R^k$ onto $V_{\perp}$ and $P_{\perp}\in \operatorname{Sym^2(V_{\perp})}.$ Moreover, it is invariant under the diagonal action of $S_k$ and satisfies \[
\operatorname{diag}(P_{\perp})=(1-\frac{1}{k})\mathbf 1\in V_o.
\]
For any \(w\in V_\perp\), let 
\[
U_w=(w\mathbf 1^\top+\mathbf 1 w^\top)-k\,\mathrm{Diag}(w) \in\operatorname{Sym^2(V_{\perp})},
\]
thus
\(\operatorname{diag}(U_w)=(2-k)w\in V_{\perp}\). This construction shows that 
$\operatorname{diag}$ is surjective onto $V_{o}\oplus V_{\perp},$ with kernel 
\[
\mathcal S_0:=\ker(\operatorname{diag})
=\{U\in \operatorname{Sym^2(V_{\perp})}:\ \operatorname{diag}(U)=0\}.
\]
Hence, as $S_k$–representations,
\begin{align*}
\operatorname{Sym^2(V_{\perp})}
&\cong\mathcal S_0\oplus S^{(k)}\oplus S^{(k-1,1)},
\end{align*}
where $\mathcal S_0\cong\ S^{(k-2,2)}.$ Notice that 
the trivial representation 
$S^{(k)}$ corresponds to the subspace $\operatorname{span}\{P_{\perp}\}\subset \operatorname{Sym^2}(V_{\perp})$ and 
\begin{equation}\label{eq:Rkk-decomp}
V:=\mathbb R^{k^2}\ \cong\operatorname{span}\{\mathbb I_k,J\}
\ \oplus\Big(\ V_o\otimes V_{\perp}\Big)\oplus \Big(V_{\perp}\otimes V_o\Big)\oplus S^{(k-1,1)}\oplus \mathcal S_0 \oplus \wedge^2(V_{\perp}) 
.
\end{equation}
We next discuss the 3 copies of standard representation $\Big(\ V_o\otimes V_{\perp}\Big)\oplus \Big(V_{\perp}\otimes V_o\Big)\oplus S^{(k-1,1)}.$
Fix a basis \(r=(r_1,\dots,r_{k-1})^T\) of \(V_\perp\) (e.g.\ \(r_i=e_i-e_k\)).
For each \(i\), put
\begin{equation}\label{eq:KSD}
K_i:=r_i\mathbf 1^\top-\mathbf 1 r_i^\top,\qquad
S_i:=r_i\mathbf 1^\top+\mathbf 1 r_i^\top,\qquad
D_i:=\mathrm{Diag}(r_i)-\frac{1}{k}S_i,\qquad
\mathcal W_i:=\operatorname{span}\{K_i,S_i,D_i\}.
\end{equation}
Then we have the following properties
\begin{align}\label{eq:standard}
K_iJ=S_iJ=kr_i\mathbf 1^T,\; 2r_i\mathbf 1^T=S_i+K_i,\; 2\mathbf 1r_i^T=S_i-K_i,\; D_iJ=0.
\end{align}
Moreover, it's obvious that $K_i, S_i$ span the two copies of $S^{(k-1,1)}$ in $V_o\otimes V_{\perp}\oplus V_{\perp}\otimes V_o,$ anti-symmetric and symmetric parts, respectively, and $D_i$ span the copy in $\operatorname{Sym^2}(V_{\perp})$. Therefore,
\[
\bigoplus_{i=1}^{k-1}\mathcal W_i \cong \Big(\ V_o\otimes V_{\perp}\Big)\oplus \Big(V_{\perp}\otimes V_o\Big)\oplus S^{(k-1,1)}
\]
and
\begin{equation}\label{eq:Rkk-decomp_com}
V:=\mathbb R^{k^2}\cong\operatorname{span}\{\mathbb I_k,J\}
\ \oplus \bigoplus_{i=1}^{k-1}\mathcal W_i\oplus\mathcal S_0\oplus\ 
\wedge^2(V_{\perp}).
\end{equation}
We next compute the eigenvalues of $\mathcal L_{\alpha}$ based on the matrix identification in equation \eqref{eq:Rkk-decomp_com}. By direct computation using equation \eqref{eq:specturm}, one has
\begin{enumerate}
    \item[a)] For $U\in \wedge^2(V_{\perp}): \mathcal L_{\alpha}(U)=(b-c)U=(\frac{\alpha}{4}-\frac{\alpha}{2\pi})U,$ with multiplicity $\frac{(k-1)(k-2)}{2}.$ 
    \item[b)] For $U\in \mathcal S_0: \mathcal L_{\alpha}(U)=(b+c)U=(\frac{\alpha}{4}+\frac{\alpha}{2\pi})U,$ with multiplicity $\frac{(k-3)k}{2}.$   
    \item[c)] $U\in \operatorname{span}\{\mathbb I_k,J\}:$ $\mathcal L_{\alpha}(\mathbb I_k)=aJ+(b+ck-c)\mathbb I_k$ and $\mathcal L_{\alpha}(J)=c(k-2)\mathbb I_k+(ka+b+c)J,$ which implies the restriction $\mathcal L_{\alpha}|_{\operatorname{span}\{\mathbb I_k,J\}}$ is given by
    \[
    \begin{bmatrix}
b+c(k-1) & a\\[2pt]
c(k-2) & ak+b+c
\end{bmatrix}
\]
with eigenvalues
$\frac12\!\left(2b+k(a+c)\ \pm\ \sqrt{\,k^2(a-c)^2+4c(2a-c)(k-1)\,}\right)$ and each multiplicity $1.$
\item[d)] For $U\in \operatorname{span}{\mathcal W_i:}$ By applying properties \eqref{eq:standard}, one has
\begin{equation}\label{eq:blockW}
\begin{aligned}
\mathcal L_{\alpha}(K_i)&=\Big(b-c+\tfrac{ak}{2}\Big)K_i+\tfrac{ak}{2}\,S_i,\\
\mathcal L_{\alpha}(S_i)&=\tfrac{ak}{2}\,K_i+\Big(b+c+\tfrac{ak}{2}-\tfrac{4c}{k}\Big)S_i-4c\,D_i,\\
\mathcal L_{\alpha}(D_i)&=-\tfrac{2c}{k^2}\,S_i+(b+c-\tfrac{2c}{k})\,D_i.
\end{aligned}
\end{equation}
Hence, in the ordered basis \(\{K_i,S_i,D_i\}\), the restriction \(\mathcal L|_{\mathcal W_i}\) is given by
\[
\begin{bmatrix}
b-c+\frac{ak}{2} & \frac{ak}{2} & 0\\[3pt]
\frac{ak}{2} & b+c+\frac{ak}{2}-\frac{4c}{k} & -4c\\[3pt]
0 & -\frac{2c(k-2)}{k^2} & b-c+\frac{4c}{k}
\end{bmatrix},
\]

whose eigenvalues are
\[\ b-c,\qquad
\frac12\!\left(ak+2b\ \pm\ \sqrt{a^2k^2+4c(c-2a)}\right)\ 
\]
\end{enumerate}
Therefore, for $k\ge 4$, we obtain the spectrum $\lambda_{S^{(\eta)}}$ of loss $\mathcal L _{\alpha}$ listed in table \eqref{tab:spectrum}, where $S^{(\eta)}$ denotes the Specht module with partition $\eta$: 
\begin{table}[h!]
\centering
\renewcommand{\arraystretch}{0.75}
\begin{tabular}{|c|c|}
\hline
$\lambda_{S^{(\eta)}}\in\operatorname{Spec}(\mathcal L_{\alpha})$ & Multiplicity \\
\hline\hline
$b-c$ & $k(k-1)/2$ \\
$b+c$ & $k(k-3)/2$ \\
$\displaystyle 
\frac{1}{2}\left(ak+2b\ \pm\ \sqrt{a^{2}k^{2}+4c(c-2a)}\right)$
& $k-1$ each \\
$\displaystyle
\frac{1}{2}\left(2b+k(a+c)\ \pm\ \sqrt{k^{2}(a-c)^{2}+4c(2a-c)(k-1)}\right)$
& $1$ each \\
\hline
\end{tabular}
\caption{Spectrum of $\mathcal L$}
\label{tab:spectrum}
\end{table}
\section{Critical Sets and Bifurcation Result}\label{sec:4}
\subsection{Critical Set.}\label{sec:4.1}
Consider the global minima $v^o$ introduced earlier, our main objective is to identify non-stationary solutions bifurcating from $v^o,$ namely, non-constant solutions of system\eqref{eq: system}. The forthcoming analysis is based on the Slice Criticality Principle (see Theorem~\eqref{thm:SCP}), which provides a framework for computing the equivariant gradient degree of $\nabla_u \mathcal F_{\alpha}$. \vs
Let $G(v^o)\subset V$ denote the group orbit of $v^o,$
We then define the restriction
\[
\mathscr F(\alpha, u):=\mathcal F({\alpha},u)|_{\mathbb{R}\times S_o}.
\]
By construction, the functional $\mathscr F$ is invariant under the isotropy subgroup $G_{v^o}$. This restriction allows us to apply the Slice Criticality Principle in a small neighborhood $\mathcal U$ of $G(v^o)$ in order to compute the equivariant gradient degree of $\nabla_u \mathcal F_{\alpha}$.\vs
Next, we introduce the linearization operator
\begin{equation}\label{eqn:linearization}
\mathscr L(\alpha):=\nabla_{u}^{2}\mathscr F(\alpha,v^{o}):S_o\to S_o.
\end{equation}
Since $
\mathscr L(\alpha)=
\nabla_{u}^{2}\mathscr F(\alpha,v^{o}),$ one verifies that $G(v^{o})$ forms a finite-dimensional isolated orbit of critical points of $\mathscr F$ whenever $\mathscr L(\alpha)$ is an isomorphism. Consequently, if a pair $(\alpha^{o},v^{o})$ represents a bifurcation point of system~\eqref{eq: system}, then $\mathscr L(\alpha^{o})$ must fail to be an isomorphism. We therefore define the critical set associated with $v^o$ as
\[
\Lambda:=\{\alpha \in \mathbb R:\mathscr L(\alpha)\text{ is not an isomorphism}\}.
\]
From the spectrum presented in Table \eqref{tab:spectrum}, it follows that the critical set in our case is given by
\[
\Lambda=\Big\{0, \frac{8\pi+2k\pi^2}{4+(k-1)\pi^2+4\pi},\frac{2\pi(4-4k+2k^2+k\pi)}{(k-1)(2k\pi+\pi^2-4\pi-4)}\Big\}, \qquad k\in \mathbb N^+,k\ge 4.
\]
\begin{figure}[H]
\includegraphics[width=0.6\textwidth, keepaspectratio]{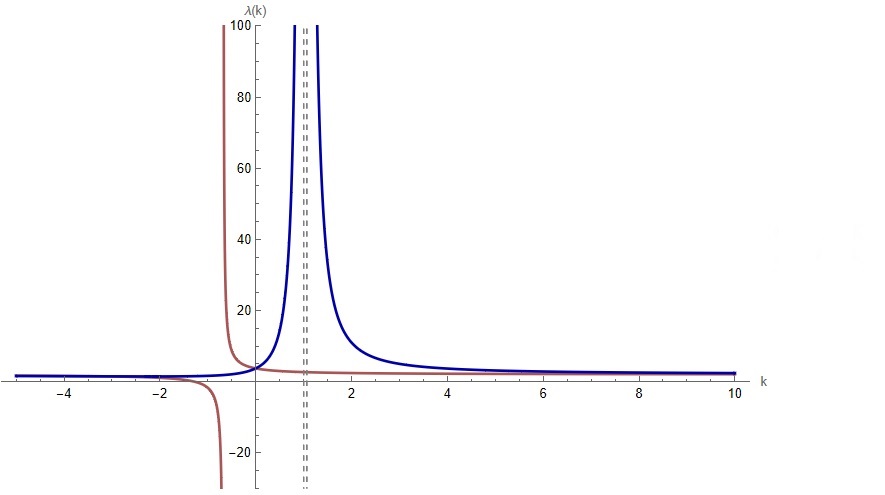}
  \caption{critical numbers $\alpha_{j}$}
  \label{fig:criticalnum}
\end{figure}
\noindent A direct analysis shows that for $k \ge 4$, the eigenvalue 
$\alpha_{S^{(k-1,1)}}(k)$ remains strictly less than 
$\alpha_{S^{(k)}}(k)$, with both approaching the horizontal 
asymptote $\alpha = 2$ as $k \to \infty$. Therefore, by combining results from section \eqref{sec:3.2}, one has the following relation among different critical numbers:
\[
\alpha_{S^{(k-2,2)}}=\alpha_{S^{(k-2,1,1)}}=\alpha_{S^{(k-1,1)}}^1<\alpha_{S^{(k-1,1)}}^2=\alpha_{S^{(k-1,1)}}^3<\alpha_{S^{(k)}}^1=\alpha_{S^{(k)}}^2,
\]
where $k\ge 4$ is any fixed integer. Notice, the critical numbers are not uniquely identified by the indices due to resonance. 
\subsection{Equivariant Bifurcation Result for Any Number of Neurons \texorpdfstring{$k\ge 4$}{k≥4}} Note that the bifurcation invariant $\omega_G(\alpha_{S^{(\eta)}})$ takes values in the Euler ring $U(S_k).$ This formulation enables a full characterization of symmetry types associated with Weyl groups of nonzero dimension. In the present context, however, since $S_k$ is discrete group so the computation can be carried out by restricting to the Burnside ring $A(S_k)$. The main equivariant bifurcation result for fully connected neural networks in our setting is stated as follows. 
\begin{theorem}\label{thm:main}
Consider fully connected two-layer teacher–student neural networks with Gaussian inputs and leaky ReLU, both input and hidden width $k\ge 4,$ 
\begin{enumerate}
\item[i)] Given nonzero invariant $\omega_G(\alpha_{s^{(\eta)}})$, the system \eqref{eq: system}
admits branches of critical points emerging from the global minima $v^o,$ with symmetry types corresponding to the following four $S_k$ Specht isotypic components: $S^{(k)},S^{(k-1,1)},S^{(k-2,2)},S^{(k-2,1,1)}$. Typically, there are three distinct branches bifurcating from $v^o$ when $\alpha$ cross zero, each exhibiting the symmetry from one of the representations $S^{(k-1,1)}$, $S^{(k-2,2)}$, and $S^{(k-2,1,1)}$.
\item[ii)] The bifurcation only occurs when leaky slope $\alpha$ is nonnegative and the bifurcation threshold is width-invariant.
\item[iii)]  For any $k\ge 4,$ the engineering regime $\alpha \in (0,1)$ is uniformly subcritical thus the architecture remains equivariantly unbroken.
\end{enumerate}
\end{theorem}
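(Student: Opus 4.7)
I would start by invoking the Slice Criticality Principle (Appendix~\ref{app:C}) to reduce the $G$-equivariant gradient degree of $\nabla_u\mathcal F_\alpha$ on a tubular neighborhood $\mathcal U$ of $G(v^o)$ to an $S_k$-equivariant gradient degree on a ball around $v^o$ in the slice $S_o=V$. Because $T_{v^o}G(v^o)=\{0\}$ and the isotropy of $v^o$ is $\triangle S_k\cong S_k$, this reduction preserves all symmetry information, and the resulting bifurcation invariant lives in the Burnside ring $A(S_k)=\mathbb Z[\Phi(S_k)]$ expressed in the basis of conjugacy classes of subgroups of $S_k$.

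At every regular value $\alpha\notin\Lambda$, the linearization $\mathscr L(\alpha)$ is an $S_k$-isomorphism, so multiplicativity of the gradient degree together with the $S_k$-isotypic decomposition~\eqref{eq: decomposition} gives
\[
\nabla_{S_k}\text{-deg}(\nabla\mathscr F_\alpha,B_{v^o})=\prod_{\eta}\bigl(\mathrm{deg}_{S^{(\eta)}}\bigr)^{m_\eta(\alpha)},
\]
where $\mathrm{deg}_{S^{(\eta)}}$ is the basic gradient degree of $-\id$ on the Specht module $S^{(\eta)}$ and $m_\eta(\alpha)$ counts the negative eigenvalues of $\mathscr L(\alpha)$ on the $S^{(\eta)}$-isotypic summand, all read off Table~\ref{tab:spectrum}. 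The bifurcation invariant $\omega_G(\alpha^o)$ is then the difference of two such products; after factoring out the common contribution of the isotypic summands on which no sign change occurs, $\omega_G(\alpha^o)$ depends only on the components in which eigenvalues cross zero as $\alpha$ passes $\alpha^o$.

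With this in hand, part (i) follows by reading off which isotypic components carry a zero-crossing at each of the three elements of $\Lambda$. The ordering established after the definition of $\Lambda$ shows that $\alpha=0$ activates $S^{(k-1,1)},S^{(k-2,2)},S^{(k-2,1,1)}$ simultaneously (the concurrent sign change inside the trivial piece $S^{(k)}$ stays in the full-symmetry fixed-point set and contributes no new orbit type), whereas the next critical value activates only two additional copies of $S^{(k-1,1)}$, and the largest activates only $S^{(k)}$. Each nonzero coefficient in the corresponding $\omega_G(\alpha^o)$ produces a global branch of critical points emanating from $v^o$ with the associated symmetry type, which gives the four admissible Specht types and the three distinct branches at $\alpha=0$.

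Parts (ii) and (iii) close by direct analysis of the explicit formulas for $\Lambda$. A sign check on the numerator and denominator of each nonzero critical value shows both are strictly positive for every $k\ge 4$, so $\Lambda\subset[0,\infty)$; combined with the $k$-independence of $0\in\Lambda$ this yields (ii). For (iii), the elementary inequality $4\pi+(k+1)\pi^2>4$ (and its analogue for the second critical value) forces both nonzero elements of $\Lambda$ to exceed $1$ for every $k\ge 4$, so $\Lambda\cap(0,1)=\emptyset$; hence $\mathscr L(\alpha)$ has constant Morse index on the engineering regime and no equivariant symmetry breaking occurs. The main obstacle lies in making part (i) completely explicit: decomposing $\mathrm{deg}_{S^{(\eta)}}$ in the Burnside basis $\Phi(S_k)$ requires identifying the maximal isotropy subgroups of the Specht modules $S^{(k-2,2)}$ and $S^{(k-2,1,1)}$, which is a delicate combinatorial task involving Young subgroups and their intersections; this is the step—rather than the degree bookkeeping or the arithmetic analysis of $\Lambda$—that carries the true weight of the argument.
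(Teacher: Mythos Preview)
Your proposal follows essentially the same route as the paper: Slice Principle reduction to $S_k$, linearization, expression of the degree as a product of basic degrees over isotypic components, and reading off the sign changes from Table~\ref{tab:spectrum} to obtain the formula for $\omega_G(\alpha^o)$; parts~(ii) and~(iii) are handled identically by direct inspection of the explicit critical set $\Lambda$.

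Two small points deserve correction. First, your parenthetical remark that there is a ``concurrent sign change inside the trivial piece $S^{(k)}$'' at $\alpha=0$ is not what the paper's ordering asserts: the $S^{(k)}$ eigenvalues have their crossing at the \emph{largest} element of $\Lambda$, not at $0$, so no special argument is needed to exclude $S^{(k)}$ from the $\alpha=0$ analysis. Second, you locate the ``true weight of the argument'' in decomposing $\deg_{S^{(\eta)}}$ into the Burnside basis and identifying maximal isotropies of the Specht modules, but the paper's proof of the theorem does \emph{not} carry out that step for general $k$: it stops at the product formula~\eqref{alg:algorithm_invariant} and the statement that nonzero coefficients yield branches. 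The explicit Burnside-ring computations (via \textsc{G.A.P.}) and the listing of maximal orbit types are deferred to the $k=5$ example in Section~\ref{sec:5}, so the theorem as stated is proved at the level of the degree formula rather than the full symmetry classification you anticipate.
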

\begin{proof}
i).  For each $\alpha_{S^{(\eta_o)}}\in \Lambda$ such that the interval $\alpha_-<\alpha_{S^{(\eta_o)}}<\alpha_+$ contains no other critical numbers, i.e. $[\alpha_-, \alpha_+]\cap\Lambda=\{\alpha_{S^{(\eta_o)}}\}.$ As established in Section \ref{sec:4.1}, there exists an isolated tubular neighborhood $\mathcal U$ of $G(v^o)$ such that $\Bar {\mathcal U}$ contains no other critical orbits of $\mathcal F_{\alpha_{\pm}}$. Applying the Slice Principle (Theorem \ref{thm:SCP}), one obtains
    \[
   \nabla_{G}\text{\textrm{-deg}}\Big(\nabla\mathcal F_{\alpha_{\pm}}
,\mathcal U\Big)=\Theta\Big(\nabla_{G_{v^o}}\text{\textrm{-deg}}(\nabla \mathcal F_{\alpha_{\pm}}
,\mathcal U\cap S_o)\Big),
\]
where in the present setting $G=S_k\times S_k,\; G_{v^o}=\triangle S_k,$ and $\Theta: U(G_{v^o})\to U(G)$ is a homomorphism defined by $\Theta(H)=(H)$ for each orbit type $(H)\in \Phi_0(G).$ Hence the associated topological invariant $\omega_G(\alpha_{S^{(\eta_o)}})$ takes the form
\begin{equation}\label{eqn:invariant}
\omega_G(\alpha_{S^{(\eta_o)}})=\nabla_{G_{v^o}}\text{\textrm{-deg}}(\nabla\mathcal F_{\alpha_{-}}
,\mathcal U\cap S_o)-\nabla_{G_{v^o}}\text{\textrm{-deg}}(\nabla\mathcal F_{\alpha_{+}}
,\mathcal U\cap S_o),
\end{equation}
where, for brevity, $\Theta$ is omitted in the notation. \vs
\noindent If this invariant expands as
\[
\omega_G(\alpha_{S^{(\eta_o)}})=m_1(H_1)+\cdots m_r(H_r),
\]
with nonzero coefficients $m_i\neq 0, \; (i=1,\cdots, r),$ then it follows that branches of nontrivial solutions bifurcate from $v^o$ with symmetry at least $(H_i).$ Our objective is therefore to determine, for each $\alpha_{S^{(\eta_o)}}\in \Lambda$, the general expression of $\omega_G(\alpha_{S^{(\eta_o)}})$ and, in particular, the coefficients $m_i$ for each $(H_i).$ \vs

By linearization (see \eqref{eqn:linearization}) and its computation based on $G$-equivariant basic degree \eqref{eq:grad-lin}, one can derive the following
\[
\nabla_{G_{v^o}}\text{\textrm{-deg}}(\nabla\mathcal F_{\alpha_{\pm}}
,\mathcal U\cap S_o)=\nabla_{G_{v^o}}\text{\textrm{-deg}}(\mathcal A_{\alpha_{\pm}}
,\mathcal U\cap S_o)
\]
and 
\[
\nabla_{G_{v^o}}\text{\textrm{-deg}}(\mathcal A_{\alpha_{-}}
,\mathcal U\cap S_o)=\prod_{\{\eta:\;\alpha_{S^{(\eta)}}<\alpha_{S^{(\eta_o)}}\}}\nabla\text{\textrm{-deg}}_{\mathcal W_{\eta}}^{m_{\eta}(\alpha_{S^{(\eta)}})},
\]
\[
\nabla_{G_{v^o}}\text{\textrm{-deg}}(\mathcal A_{\alpha_{+}}
,\mathcal U\cap S_o)=\nabla\text{\textrm{-deg}}_{\mathcal W_{\eta_o}}\prod_{\{\eta:\;\alpha_{S^{(\eta)}}<\alpha_{S^{(\eta_o)}}\}}\nabla\text{\textrm{-deg}}_{\mathcal W_{\eta}}^{m_{\eta}(\alpha_{S^{(\eta)}})}.
\]
Notice that each $\nabla\text{\textrm{-deg}}_{\mathcal W_{\eta}}$ represents the basic degree, which can be directly computed using \textsc{G.A.P} for a given $k$. From Sections~\eqref{sec:2.2} and~\eqref{sec:3.2}, we also have that
$m_{\eta}(\alpha_{S^{(\eta)}})=1$. Consequently, the following expression for $\omega_G(\alpha_{S^{(\eta_o)}})$ can be obtained:
\begin{equation}\label{alg:algorithm_invariant}
\omega_G(\alpha_{S^{(\eta_o)}})=\prod_{\{\eta:\;\alpha_{S^{(\eta)}}<\alpha_{S^{(\eta_o)}}\}}\nabla\text{\textrm{-deg}}_{\mathcal W_{\eta}}\Big((S_k)-\nabla\text{\textrm{-deg}}_{\mathcal W_{\eta_o}}\Big).
\end{equation}
Moreover, note that the critical number~$0$ corresponds to three distinct $S_k$-isotypic components $S^{(k-1,1)}$, $S^{(k-2,2)}$ and $S^{(k-2,1,1)}$, each giving rise to a separate branch of bifurcating critical points.\vs
\noindent Results ii) and iii) can be easily derived from direct analysis in Section \eqref{sec:4.1} and Figure \eqref{fig:criticalnum}.
\end{proof}
\section{Numerical Example}\label{sec:5} In this section, we illustrate how the framework in Theorem \eqref{thm:main} can be applied to identify the symmetries of bifurcating critical points. For demonstration, we take the case $k=5$. However, the same procedure applies to any $k\ge 4$. \vs 
\noindent Notice that in our case, the space $V=\mathbb R^{25}$ is a representation of the group $G=S_5\times S_5,$ and the action of $G$ on $\mathbb R^{25}$ is given by
\[(\sigma, \gamma)
(v_1,v_2,\cdots,v_5)^T=(\gamma v_{\sigma(1)}, \gamma v_{\sigma(2)}, \cdots, \gamma v_{\sigma(5)})^T.
\] 
Let's consider global minima $v^o \in \Omega$ and its isotropy group $G_{v^o},$
\[
\triangle{S}_5:=\left\{(\sigma,M_{\sigma})\in S_5\times O(5): \sigma \in S_5\right\}.
\]
 The purpose of the following is to obtain the $\triangle{S}_5$ isotypic decomposition of the slice $S_o$ which is equivalent to $V=\mathbb R^{25}$. We first study the character table of ${S}_5,$ as shown in Table \eqref{tbl:characters_s5}, where $\chi_j,j=1,\cdots,7$ denotes the characters of all $S_5$ irreducible representation $\mathcal{W}_j$ and $\mathcal{\chi}_{V}$ is the character of $V.$

\begin{table}[H] 
\centering
\resizebox{0.8\textwidth}{!}{
\begin{tabular}{|c|c|c|c|c|c|c|c|c|}
\hline
Rep. & Character & (1)& (12) & (12)(34) &(123) & (123)(45) &(1234) & (12345) \\\hline
$\mathcal{W}_{1}$ & $\chi_{1}$ & 1 &-1  &1  &1 &-1 &-1  &1\\
$\mathcal{W}_{2}$ & $\chi_{2}$ & 4 &-2  &0  &1  &1  &0 &-1\\
$\mathcal{W}_{3}$ & $\chi_{3}$ & 5 &-1  &1 &-1 &-1 & 1 & 0\\
$\mathcal{W}_{4}$ & $\chi_{4}$ & 6  &0 &-2  &0  &0  &0  &1\\
$\mathcal{W}_{5}$ & $\chi_{5}$ & 5  &1 & 1 &-1 & 1 &-1 & 0\\
$\mathcal{W}_{6}$ & $\chi_{6}$ & 4  &2  &0  &1 &-1 & 0 &-1\\
$\mathcal{W}_{7}$ & $\chi_{7}$ & 1 & 1 & 1 & 1 & 1 & 1 & 1\\\hline
$V=\mathbb{R}^{25}$ & $\chi_{V}$ & $25$ &9  &1  &4  &0 & 1& 0
\\\hline
\end{tabular}
}
\caption{Character Table of $S_5$}
\label{tbl:characters_s5}
\end{table}

\noindent By direct computation,
\[
V=\mathbb R^{25}=\mathcal W_{4}\oplus \mathcal W_{5}\oplus 3\mathcal W_{6}\oplus 2\mathcal W_{7}.
\]
Notice $\mathcal W_{7}$ is the trivial representation, $\mathcal W_{6}$ can be identified with standard representation given that $\dim \mathcal W_{6}=4.$ Similarly, $\mathcal W_{5}\cong S^{(3,2)}$ and $\mathcal W_{4}\cong S^{(3,1,1)}.$ By applying formula \eqref{eq:spe_I}, we conclude that the spectrum of the Hessian consists of
\begin{equation}
\label{eq:spectrum}\sigma(\nabla^{2}_u\mathcal F_{\alpha}(v^{o}))=\begin{cases}
\frac{\alpha}{4}-\frac{\alpha}{2\pi}  \qquad  \text{
with mult= }10\\
\frac{\alpha}{4}+\frac{\alpha}{2\pi}\qquad  \text{ with mult }5\\
\frac{\pi(10-3\alpha)\pm \rho_1}{8\pi}\quad \text{ with mult=} 4\\
\frac{\pi(10-3\alpha)+10\alpha\pm \rho_2}{8\pi}\quad \text{ with mult=} 1\\
\end{cases}
\end{equation}
where $\rho_1=\sqrt{25\pi^2(\alpha-2)^2+16\pi\alpha(\alpha-2)+16\alpha^2},\quad \rho_2=\sqrt{25\pi^2(\alpha-2)^2+36\pi\alpha(\alpha-2)+36\alpha^2}.$
Moreover, we have
$\mathcal A_{\alpha}|_{\mathcal W_{j}}=\lambda_j  \text{\textrm{Id\,}}$
Thus $\mathcal A_{\alpha}|_{\mathcal W_{j}}=0$ if and only if $\lambda_j(\alpha)=0$ for $j=4,5,6,7$. Notice that the one-to-one correspondence between index $j$ and partition $\eta$ are as discussed above. We denote the critical numbers $\alpha\in\Lambda$ as
$\alpha_{j}\in\ker(\lambda_j)~,$
and the critical set $\alpha$ associated with the equilibrium $v^{o}$ of the system
\eqref{eq: system} is described as
\[
\Lambda:=\left\{ \alpha_j\in \ker(\lambda_j):j=4,5,6,7\right\} .
\]
\noi For our specific case, one can derive the relation among different $\alpha_{j}:$
\begin{align}\label{sq:ordering}
0=\alpha_{4}=\alpha_{5}=\alpha_{6}^1<\alpha_{6}^2=\alpha_{6}^3<\alpha_{7}^1=\alpha_{7}^2\approx 3.1587.
\end{align}
\textbf{Computation of the Gradient Degree.}
The following are the basic degrees in $A(S_5)$ computed by G.A.P (see \cite{balanov2025degree}), with maximal orbit types in each isotypic component noted in red.
\begin{align}\label{eqn:basicdegree}
\nabla\text{-deg}_{\mathcal W_{4}}=&-(\mathbb Z_1)+2(D_1)+(\mathbb Z_2)+(\mathbb Z_3)-\textcolor{red}{(\mathbb Z_4)}-\textcolor{red}{(D_2)}-\textcolor{red}{(D_3)}-\textcolor{red}{(\mathbb Z_6)}+(S_5),\\
\nabla\text{-deg}_{\mathcal W_{5}}=&-(D_2)+(V_4)+3(D_2)-\textcolor{red}{2(D_4)}-\textcolor{red}{(D_5)}-\textcolor{red}{2(D_6)}+(S_5),\notag\\
\nabla\text{-deg}_{\mathcal W_{6}}=& (\mathbb Z_1)-4(D_1)+3(D_2)+3(D_3)-\textcolor{red}{2(D_6)}-\textcolor{red}{2(S_4)}+(S_5),\notag\\
\nabla\text{-deg}_{\mathcal W_{7}}=&-\textcolor{red}{(S_5)}.\nonumber
\end{align}
Then by applying formula \eqref{alg:algorithm_invariant}, one can derive the following bifurcation invariants:
\begin{align}\label{eq:invariant}
\omega_{G}(\alpha_{4}) =\omega_{G}(\alpha_{5})=\omega_{G}(\alpha_{6}^1)  =&(S_5)-\nabla\text{\textrm{-deg}}_{\mathcal{W}_{4}%
}*\nabla\text{\textrm{-deg}}_{\mathcal{W}_{5}%
}*\nabla\text{\textrm{-deg}}_{\mathcal{W}_{6}%
},\\
\omega_{G}(\alpha_{6}^{2})=\omega_{G}(\alpha_{6}^{3}) =&\nabla\text{\textrm{-deg}}_{\mathcal{W}_{4}%
}*\nabla\text{\textrm{-deg}}_{\mathcal{W}_{5}%
}*\nabla\text{\textrm{-deg}}_{\mathcal{W}_{6}%
}\ast\Big((S_{5})-\nabla\text{\textrm{-deg}}_{\mathcal{W}_{6}}\Big)\nonumber\\=&\nabla\text{\textrm{-deg}}_{\mathcal{W}_{4}%
}*\nabla\text{\textrm{-deg}}_{\mathcal{W}_{5}%
}*\nabla\text{\textrm{-deg}}_{\mathcal{W}_{6}%
}-\nabla\text{\textrm{-deg}}_{\mathcal{W}_{4}%
}\ast\nabla\text{\textrm{-deg}}_{\mathcal{W}_{5}},\nonumber\\
\omega_{G}(\alpha_{7}^{1})=\omega_{G}(\alpha_{7}^{1}) =&\nabla\text{\textrm{-deg}}_{\mathcal{W}_{4}%
}\ast\nabla\text{\textrm{-deg}}_{\mathcal{W}_{5}}\ast\Big((S_5)-\nabla
\text{\textrm{-deg}}_{\mathcal{W}_{7}}\Big)\nonumber\\
=&\nabla\text{\textrm{-deg}}_{\mathcal{W}_{4}%
}\ast\nabla\text{\textrm{-deg}}_{\mathcal{W}_{5}}-\nabla\text{\textrm{-deg}}_{\mathcal{W}_{4}%
}\ast\nabla\text{\textrm{-deg}}_{\mathcal{W}_{5}}\ast\nabla
\text{\textrm{-deg}}_{\mathcal{W}_{7}}.\nonumber
\end{align}
Given a maximal orbit type $(H)$ associated with a particular irreducible representation, we now analyze how the coefficient of $(H)$ behaves in the bifurcation invariant $\omega_G(\alpha_{j_o}).$\vs
\noindent Put 
\[
\mathscr D^j:=\Big\{(\Sigma):(\Sigma) \text{ is maximal in } \nabla
\text{\textrm{-deg}}_{\mathcal{W}_{j}}\Big\}.
\]
For two basic degrees $\nabla
\text{\textrm{-deg}}_{\mathcal{W}_{j_o}}$ and $\nabla
\text{\textrm{-deg}}_{\mathcal{W}_{\tilde{j}_o}},$ let $(H)\in \mathscr D^{j_o},\; (K) \in \mathscr D^{\tilde{j}_o}.$ We may write
\[
\nabla
\text{\textrm{-deg}}_{\mathcal{W}_{j_o}}=(S_5)+n_H(H)+\cdots,\qquad \nabla
\text{\textrm{-deg}}_{\mathcal{W}_{\tilde{j}_o}}=(S_5)+n_K(K)+\cdots,
\]
where dots denote terms corresponding to submaximal orbit types. By Lemma \eqref{le:basic_coefficient},
\[
n_H=\begin{cases}
    -1,\;\;\; |W(H)|=2\\
    -2,\;\;\; |W(H)|=1,
\end{cases}
\]
where $n_K$ satisfies the analogous property. We next have the following several cases with respect to our setting \eqref{eqn:basicdegree}.
\begin{enumerate}
\item[(i)]Suppose $H=K.$ Then 
\[
\nabla
\text{\textrm{-deg}}_{\mathcal{W}_{j_o}}*\nabla
\text{\textrm{-deg}}_{\mathcal{W}_{\tilde{j_o}}}=(S_5)+\Big(2n_H+n_H^2|W(H)|\Big)(H)+\cdots,
\]
and using $2n_H+n_H^2|W(H)|=0$ (see \cite{liu2025nonlinear} for more details), we conclude that the maximal term $(H)$ cancels in the product.
\item[(ii)] Suppose $H\neq K$ and $H\cap K=Q,$ where $Q$ can be detected by both degrees. Then 
\[
\nabla
\text{\textrm{-deg}}_{\mathcal{W}_{j_o}}*\nabla
\text{\textrm{-deg}}_{\mathcal{W}_{\tilde{j_o}}}=(S_5)+n_H(H)+n_K(K)+n_Hn_Kn_Q(Q)+\cdots,
\]
and
\[
n_Q=\frac{n(Q,K)|W(K)|n(Q,H)|W(H)|-\sum_{(\widetilde{Q})>(Q)}n(Q,\widetilde
{Q})n_{\widetilde{Q}}|W(\widetilde{Q})|}{|W(Q)|} 
\]
ensures that both maximal types $(H)$ and $(K)$ persist in the product, with their coefficients preserved. A special subcase arises when $H\subset K.$ Then 
\[
\nabla
\text{\textrm{-deg}}_{\mathcal{W}_{j_o}}*\nabla
\text{\textrm{-deg}}_{\mathcal{W}_{\tilde{j_o}}}=(S_5)+n_H(H)+n_K(K)+n_Hn_Kn_{(H*K)}(H)+\cdots,
\]
where
\[
n_{(H*K)}=\frac{n(H,K)|W(K)|n(H,H)|W(H)|}{|W(H)|}=n(H,K)|W(K)||W(H)|.
\]
Since both $(H)$ and $(K)$ are maximal and $|W(K)|$ divides $|W(H)|,$ necessarily, 
\[
|W(K)|=1,|W(H)|=2 \text{  and  } n_K=-2, n_H=-1.
\]
Therefore,
\[
\nabla
\text{\textrm{-deg}}_{\mathcal{W}_{j_o}}*\nabla
\text{\textrm{-deg}}_{\mathcal{W}_{\tilde{j_o}}}=(S_5)+\Big(-1+4n(H,K)\Big)(H)-2(K)+\cdots,
\]
\end{enumerate}
Using GAP and the algorithm described above, we compute the bifurcation invariant $\omega_{G}(\alpha_{j_o})$ reduced to $A(S_5).$ Note that all maximal isotropy types are highlighted in red:%
\begin{align*}
\omega_{G}(\alpha_{4})=\omega_{G}(\alpha_{5})=\omega_{G}(\alpha_{6}^1)&=(D_1)-2(\mathbb Z_2)+(V_4)+(\mathbb Z_4)+(D_2)-2(D_3)+(\mathbb Z_6)\\
&-2(D_4)+\textcolor{red}{(D_5)}+\textcolor{red}{2(S_4)},\\
\omega_{G}(\alpha_{6}^{2})=\omega_{G}(\alpha_{6}^{3})&= 2(\mathbb Z_2)+(\mathbb Z_3)-2(V_4)-2(\mathbb Z_4)-3(D_2)+(D_3)-2(\mathbb Z_6)\\
&+4(D_4)+\textcolor{red}{2(D_6)}
-\textcolor{red}{2(S_4)},\\
\omega_{G}(\alpha_{7}^{1})=\omega_{G}(\alpha_{7}^{2})&=-2(D_1)-2(\mathbb Z_3)+2(V_4)+2(\mathbb Z_4)+4(D_2)+2(D_3)+2(\mathbb Z_6)\\
&-4(D_4)-2(D_5)-4(D_6)+\textcolor{red}{2(S_5)}.\\
\end{align*}

\appendix
\section{Explicit Form of Loss Function.}\label{app:A}
\subsection{Some Preliminaries of Probability Distribution}
\begin{definition}
Let $ x\in \mathbb{R}^k$ be random vector following distribution $\mathcal D,$ then $\mathcal D$ is called orthogonally invariant if its corresponding probability density function has the property:
\[
p( x)=p(g x),\;\; g\in O(k). 
\]   
\end{definition}
\noi Notice that the standard Gaussian distribution $ \mathcal{N}(0,\mathbb{I}_k)$ is orthogonally invariant. Indeed, for $ x\sim \mathcal{N}(0,\mathbb{I}_k),$ the probability density function is given by
\begin{equation}
p(x)=\frac{1}{(2\pi)^{\frac{k}{2}}}e^{-\frac{
\|x\|^2}{2}}.
\end{equation}
\begin{definition}
Let $ x\in \mathbb{R}^k, x\sim \mathcal{D}$ and $h:\mathbb{R}^k\to \mathbb{R},$ the expectation of $h(x)$ has the form:
\[
\mathbb{E}_{ x\sim \mathcal{D}}[h( x)]=\int_{\mathcal{D}} h(x)p(x)d x,
\]
where $p( x)$ is the probability density function.
\end{definition}
\begin{lemma}\label{le:dist}
$ x\sim \mathcal{N}(0,\mathbb{I}_k)$ implies $g^T x\sim \mathcal{N}(0,\mathbb{I}_k),\; g\in O(k).$ Indeed, for $x\in \mathbb R^k,$ one has 
\[
\mathbb E[g^Tx]=g^T \mathbb E[ x]=0,
\] and 
\begin{align*}
\text{Cov}[g^T x]&=\mathbb E[(g^Tx)(g^T x)^T]=\mathbb E[(g^T x  x^Tg]\\
&=g^T \mathbb E[ x  x^T]g=g^T\text{cov}[x]g\\
&=g^T\mathbb{I}_kg\\
&=\mathbb{I}_k
\end{align*}
\end{lemma}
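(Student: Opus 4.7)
The plan is to verify the identity in distribution $g^T x \sim \mathcal N(0,\mathbb I_k)$ by exploiting two well-known facts: (a) any linear image of a Gaussian vector is Gaussian, and (b) a Gaussian distribution on $\mathbb R^k$ is completely determined by its mean vector and covariance matrix. With these in hand, the claim reduces to checking that the mean and covariance of $g^T x$ agree with those of $x$.

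First I would argue that $g^T x$ is Gaussian. The cleanest route is to observe that for any deterministic matrix $A\in\mathbb R^{k\times k}$ and $x\sim\mathcal N(0,\mathbb I_k)$, the characteristic function of $A x$ is $\varphi_{Ax}(t)=\mathbb E[e^{i t^\top A x}]=\varphi_x(A^\top t)=e^{-\tfrac12\|A^\top t\|^2}$, which is the characteristic function of a centered Gaussian with covariance $AA^\top$. Applying this with $A=g^\top$ and using $g^\top(g^\top)^\top=g^\top g=\mathbb I_k$ (by orthogonality of $g$) yields $\varphi_{g^\top x}(t)=e^{-\tfrac12\|t\|^2}$, which is exactly the characteristic function of $\mathcal N(0,\mathbb I_k)$, so the two laws coincide by uniqueness of the characteristic function.

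As an alternative I could use a direct density-level argument: since $g\in O(k)$ is a bijection with $|\det g|=1$, the change-of-variables formula gives the density of $y=g^\top x$ as $p_Y(y)=p_X(gy)|\det g|=(2\pi)^{-k/2}e^{-\|gy\|^2/2}=(2\pi)^{-k/2}e^{-\|y\|^2/2}$, where the last equality uses $\|gy\|=\|y\|$ for orthogonal $g$. This reproduces the standard Gaussian density and matches the orthogonal-invariance framework introduced just above the lemma. The mean and covariance computations already sketched in the statement then appear as corollaries rather than as the core of the proof.

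No step here is a genuine obstacle; the only thing to be careful about is the direction of the matrix (working with $g^\top$ versus $g$) and citing orthogonality at exactly the right moment, namely $g^\top g=\mathbb I_k$, to collapse $g^\top\mathbb I_k g$ to $\mathbb I_k$. I would organize the write-up as: (i) Gaussianity of $g^\top x$ via characteristic functions (or equivalently via the density change-of-variables argument, which ties back to the orthogonal-invariance definition preceding the lemma); (ii) mean computation $\mathbb E[g^\top x]=g^\top\mathbb E[x]=0$; (iii) covariance computation $\mathrm{Cov}[g^\top x]=g^\top\mathrm{Cov}[x]g=g^\top g=\mathbb I_k$; (iv) conclude by the mean–covariance characterization of multivariate Gaussians.
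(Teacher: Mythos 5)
Your proposal is correct and follows essentially the same route as the paper, whose entire argument is the mean and covariance computation displayed in the lemma statement itself. The only difference is that you explicitly justify that $g^\top x$ is Gaussian (via characteristic functions or the change-of-variables/density argument), a step the paper leaves implicit; this is a welcome strengthening since mean and covariance alone do not determine the law without first knowing the distribution is Gaussian.
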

\subsubsection{Explicit Form of $f_{\alpha}( w,v)$}
Let $f_{\alpha}:\mathbb{R}^k\times \mathbb{R}^k \to \mathbb{R}$ given by
\begin{equation}\label{eq:expectation}
f_{\alpha}( w, v)=\mathbb{E}_{x\sim {\mathcal{N}(0,\mathbb{I}_k)}}\Big(\sigma_{\alpha}(w^Tx)\sigma_{\alpha}( v^T x)\Big),\;\; \;x\in \mathbb{R}^k,
\end{equation}
where $\sigma_{\alpha}(a)=\max\{(1-\alpha)a,a\},\; a\in \mathbb{R}$ is the leaky ReLU activation function, one has:
\begin{lemma}\label{le:property_f}(Properties of $f$)
\begin{itemize}
   \item[i)] $f_{\alpha}(w, v)$ is positively homogeneous. i.e. $f_{\alpha}(\delta  w, \gamma v)=\delta \gamma f_{\alpha}( w, v),\;\; \delta, \gamma \geq 0.$
   \item[ii)] $f_{\alpha}( w,  v)$ is $O(k)$ invariant. i.e. $f_{\alpha}(gw,g v)=f_{\alpha}( w, v),\;\;  w, v \in \mathbb{R}^k, g\in O(k).$
\end{itemize}
\end{lemma}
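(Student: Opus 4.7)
My plan is to verify both items directly from the definition of $f_\alpha$ in \eqref{eq:expectation} together with two elementary properties: the positive homogeneity of the leaky ReLU $\sigma_\alpha$, and the orthogonal invariance of the standard Gaussian recorded in Lemma \ref{le:dist}.

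For part (i), the first step is to observe the pointwise identity
\[
\sigma_\alpha(\lambda a) \;=\; \max\{(1-\alpha)\lambda a,\ \lambda a\} \;=\; \lambda\max\{(1-\alpha)a,\ a\}\;=\;\lambda\,\sigma_\alpha(a),\qquad \lambda\ge 0,\ a\in\mathbb{R},
\]
where pulling $\lambda\ge 0$ out of the $\max$ uses monotonicity of multiplication by a nonnegative scalar. Applied inside the integrand of \eqref{eq:expectation} with $\lambda=\delta$ on the first factor and $\lambda=\gamma$ on the second, this gives $\sigma_\alpha((\delta w)^\top x)\,\sigma_\alpha((\gamma v)^\top x)=\delta\gamma\,\sigma_\alpha(w^\top x)\,\sigma_\alpha(v^\top x)$ for every $x$. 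Linearity of expectation then yields $f_\alpha(\delta w,\gamma v)=\delta\gamma\, f_\alpha(w,v)$.

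For part (ii), the plan is to perform the change of variables $y:=g^\top x$. By Lemma \ref{le:dist}, if $x\sim\mathcal N(0,\mathbb{I}_k)$ then $y\sim\mathcal N(0,\mathbb{I}_k)$ as well, so the distribution is unchanged. Since $(gw)^\top x=w^\top g^\top x=w^\top y$ and $(gv)^\top x=v^\top y$, we obtain
\[
f_\alpha(gw,gv)\;=\;\mathbb E_{x\sim\mathcal N(0,\mathbb I_k)}\bigl[\sigma_\alpha(w^\top y)\,\sigma_\alpha(v^\top y)\bigr]\;=\;\mathbb E_{y\sim\mathcal N(0,\mathbb I_k)}\bigl[\sigma_\alpha(w^\top y)\,\sigma_\alpha(v^\top y)\bigr]\;=\;f_\alpha(w,v).
\]

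I do not expect any substantive obstacle here; both items reduce to an exchange between an elementary scaling identity for $\sigma_\alpha$ (respectively, a linear change of variables) and the linearity of expectation. The only subtlety worth flagging is that the homogeneity of $\sigma_\alpha$ holds only for nonnegative scalars — for $\lambda<0$ the two branches of the $\max$ swap roles — which is precisely why the statement restricts to $\delta,\gamma\ge 0$. Integrability needed to justify linearity of expectation is automatic because $|\sigma_\alpha(w^\top x)\sigma_\alpha(v^\top x)|\le \max\{1,|1-\alpha|\}^2\,|w^\top x|\,|v^\top x|$, and Gaussians have finite second moments.
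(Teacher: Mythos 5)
Your proof is correct and follows essentially the same route as the paper's: positive homogeneity of $\sigma_\alpha$ pulled through the expectation for (i), and the change of variables $y=g^\top x$ combined with Lemma \ref{le:dist} for (ii). You actually make explicit two points the paper leaves implicit (the scalar identity $\sigma_\alpha(\lambda a)=\lambda\sigma_\alpha(a)$ for $\lambda\ge 0$ and the integrability justifying the manipulations), which is a welcome addition.
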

\begin{proof}
\begin{align*}
    f_{\alpha}(\delta w, \gamma v)
&=\mathbb{E}_{x\sim {\mathcal{N}(0,\mathbb{I}_k)}}\Big(\sigma_{\alpha}(\delta  w^Tx)\sigma_{\delta}(\gamma v^Tx)\Big)=\int_{\mathcal{N}(0,\mathbb{I}_k)}\sigma_{\alpha}(\delta w^Tx)\sigma_{\alpha}(\gamma v^T x)p(x)d x\\
&=\delta \gamma \int_{\mathcal D}\sigma(w^Tx)\sigma_{\alpha}(v^Tx)p( x)dx\\
&=\delta\gamma f_{\alpha}( w, v)
\end{align*}
On the other hand, apply Lemma \eqref{le:dist}, one has
\begin{align*}
    f_{\alpha}(g w, g  v)
&=\mathbb{E}_{x\sim {\mathcal{N}(0,\mathbb{I}_k)}}\Big(\sigma_{\alpha}((gw)^Tx)\sigma_{\alpha}((gv)^Tx)\Big)=\int_{\mathcal{N}(0,\mathbb{I}_k)}\sigma_{\alpha}(w^T(g^Tx))\sigma_{\alpha}( v^T(g^Tx))p(g^Tx)d x\\
&\overset{ y=g^Tx}{=}\int_{\mathcal{N}(0,\mathbb{I}_k)}\sigma_{\alpha}(w^T y)\sigma_{\alpha}( v^Ty)p( y)d y,\;\;\; g\in O(k)\\
&=f_{\alpha}( w, v)
\end{align*}
\end{proof}
\begin{proposition}
    For non-zero vectors $ w,v\in \mathbb{R}^k,$ $f_{\alpha}( w, v)$ in equation \eqref{eq:expectation} has the explicit form:
    \begin{equation*} 
        f_{\alpha}(w,v)=\frac{1}{2\pi}\|w\|\|v\|\Big(\alpha^2(\sin\theta-\theta\cos\theta)+(2+\alpha^2-2\alpha)\pi\cos\theta\Big), \;\; \theta=\cos^{-1}\frac
        {w\cdot v}{\|w\|\| v\|}.
    \end{equation*}
\end{proposition}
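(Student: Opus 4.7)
The plan is to invoke the two invariances of $f_\alpha$ from Lemma~\ref{le:property_f} to collapse the $k$-dimensional Gaussian expectation onto a two-dimensional one, then split the integrand by the sign structure of the leaky ReLU and evaluate the surviving orthant integral in polar coordinates. First, by positive homogeneity I may rescale so that $\|w\|=\|v\|=1$, and by $O(k)$-invariance I may rotate coordinates so that $w=e_1$ and $v=(\cos\theta)\,e_1+(\sin\theta)\,e_2$ (the parallel case $\theta\in\{0,\pi\}$ will then follow by continuity of the right-hand side). Setting $u:=w^\top x=x_1$ and $u':=v^\top x=(\cos\theta)x_1+(\sin\theta)x_2$, the pair $(u,u')$ is bivariate standard Gaussian with correlation $\cos\theta$, and the coordinates $x_3,\dots,x_k$ integrate away, so the task reduces to computing $\mathbb{E}[\sigma_\alpha(u)\sigma_\alpha(u')]$ under this joint law.

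Second, since $\sigma_\alpha(a)=a$ on one half-line and $\sigma_\alpha(a)=(1-\alpha)a$ on the other, I would split the expectation over the four sign quadrants of $(u,u')$. Writing
\[
I_{\varepsilon\delta}:=\mathbb{E}\bigl[uu'\cdot\mathbf{1}\{\operatorname{sgn} u=\varepsilon,\ \operatorname{sgn} u'=\delta\}\bigr],\qquad \varepsilon,\delta\in\{+,-\},
\]
the central symmetry $(u,u')\mapsto(-u,-u')$ of the Gaussian law gives $I_{++}=I_{--}$ and $I_{+-}=I_{-+}$, while $\sum_{\varepsilon,\delta}I_{\varepsilon\delta}=\mathbb{E}[uu']=\cos\theta$ yields $I_{+-}=\tfrac{1}{2}\cos\theta-I_{++}$. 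A direct case check shows that, regardless of the sign of $\alpha$, summing the four quadrant contributions collapses to
\[
f_\alpha(\hat w,\hat v)=\bigl(1+(1-\alpha)^2\bigr)\,I_{++}+2(1-\alpha)\,I_{+-}.
\]

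The only nontrivial piece is then $I_{++}=\mathbb{E}[\max(u,0)\max(u',0)]$, which I would evaluate in polar coordinates $x_1=r\cos\phi$, $x_2=r\sin\phi$, so that $u=r\cos\phi$ and $u'=r\cos(\phi-\theta)$. The radial integral $\int_0^\infty r^3 e^{-r^2/2}\,dr=2$ separates off, the region $u\ge 0,\,u'\ge 0$ reduces to $\phi\in[\theta-\tfrac{\pi}{2},\tfrac{\pi}{2}]$, and the product-to-sum identity $\cos\phi\cos(\phi-\theta)=\tfrac{1}{2}[\cos\theta+\cos(2\phi-\theta)]$ yields
\[
I_{++}=\frac{\sin\theta+(\pi-\theta)\cos\theta}{2\pi},\qquad I_{+-}=\frac{\theta\cos\theta-\sin\theta}{2\pi}.
\]
Plugging these back and applying the identities $1+(1-\alpha)^2-2(1-\alpha)=\alpha^2$ and $1+(1-\alpha)^2=2-2\alpha+\alpha^2$ collects the $\sin\theta$ and $\cos\theta$ terms into the claimed shape, after which multiplying by $\|w\|\,\|v\|$ restores the homogeneous scaling. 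The main obstacle is the polar-coordinates evaluation of $I_{++}$ (essentially the first-order arc-cosine kernel); everything else is either an invariance reduction or elementary algebra.
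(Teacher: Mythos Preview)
Your argument is correct and in fact more complete than the paper's. Both approaches begin identically: use positive homogeneity and $O(k)$-invariance from Lemma~\ref{le:property_f} to reduce to unit vectors in $\mathbb{R}^2$, then pass to polar coordinates to evaluate the positive-orthant integral $I_{++}=\frac{1}{2\pi}\bigl(\sin\theta+(\pi-\theta)\cos\theta\bigr)$. The paper's written proof stops at this point --- it restricts the integral to the region $\{x_1\ge 0,\ x_1\cos\theta+x_2\sin\theta\ge 0\}$ and obtains only the ReLU kernel, which is the $\alpha=1$ specialization of the claimed formula, deferring the general case to a reference. Your four-quadrant decomposition, together with the symmetry $I_{++}=I_{--}$, $I_{+-}=I_{-+}$ and the constraint $2I_{++}+2I_{+-}=\cos\theta$, is exactly the missing step needed to produce the $\alpha$-dependent coefficients $\alpha^2$ and $2-2\alpha+\alpha^2$. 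Your observation that the combination $(1+(1-\alpha)^2)I_{++}+2(1-\alpha)I_{+-}$ is insensitive to the sign of $\alpha$ (because swapping which half-line carries the factor $1-\alpha$ merely exchanges $I_{++}\leftrightarrow I_{--}$) is also a point the paper does not address.
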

\begin{proof}
   From Lemma \eqref{le:property_f} i), one can first assume $\|\tilde{w}\|=\|\tilde{ v}\|=1.$ From Lemma \eqref{le:property_f} ii), one can assume that $\tilde{ v}={\begin{bmatrix}
       1,0,\cdots,0
   \end{bmatrix}}^T, \tilde{ w}={\begin{bmatrix}
       \cos \theta,\sin \theta,\cdots,0
   \end{bmatrix}}^T \in \mathbb{R}^k.$ Then,
   \[
   f_{\alpha}( w, v)=\| w\|\| v\|f_{\alpha}(\tilde{ w},\tilde{ v}),
   \]
and 
\begin{align*}
f_{\alpha}(\tilde{ w},\tilde{ v})
&= \int_{\mathbb{R}^2}\sigma_{\alpha}({\tilde{ w}^T x})\sigma_{\alpha}({\tilde{ v}^T x})p( x)d x\\
&\overset{ x=(x_1,x_2)}{=}\iint_{\substack{x_1\geq 0\\ x_1 \cos \theta+x_2\sin \theta\geq 0}}\Big(x_1^2\cos\theta+x_1x_2\sin\theta\Big)p(x_1,x_2)dx_1dx_2,
\end{align*}
where $p(x_1,x_2)=\frac{1}{2\pi}e^{-\frac{x_1^2+x_2^2}{2}}.$\vs
Let $x_1=r\cos\varphi, x_2=r\sin\varphi,$ then one has:
\begin{align*}
f_{\alpha}(\tilde{ w},\tilde{ v})
&= \iint_{\substack{\cos \varphi\geq 0\\ \cos(\varphi-\theta)\geq 0}}\Big(r^2\cos^2\varphi\cos\theta+r^2\cos\varphi\sin\varphi\sin\theta\Big)\frac{1}{2\pi}p(r)rdrd\varphi,\;\; p(r)=e^{-\frac{r^2}{2}}\\
&=\Big(\int_0^\infty r^3p(r)dr\Big)\Big(\frac{1}{2\pi}\int_{\theta-\frac{\pi}{2}}^\frac{\pi}{2} \cos\theta \cos^2\varphi+\sin \theta\cos\varphi\sin\varphi d\varphi\Big)\\
&=2\Bigg[\frac{1}{4\pi}\Big((\pi-\theta)\cos\theta+\sin\theta\Big)\Bigg]
=\frac{1}{2\pi}\Big(\sin\theta+(\pi-\theta)\cos\theta\Big).\end{align*}
Therefore,
\[
f_{\alpha}( w,v)=\frac{1}{2\pi}\|w\|\| v\|\Big(\sin\theta+(\pi-\theta)\cos\theta\Big).
\]
One can also refer to \cite{arjevani2021analytic} for the detail of the proof.
\end{proof}
\subsection{Explicit Form of Loss Function \texorpdfstring{$\mathcal F_{\alpha}(W)$}{Fα(W)}}\label{app:explicit_form}
The basic idea is to find $
W$ that minimizes the distance between $W$ and
$V$, where the distance in this work is measured using the MSE method. Suppose $x\sim \mathcal{N}(0,\mathbb I_k),$ the loss function $\mathcal{F}_{\alpha}:\mathbb R^{k^2}\to \mathbb{R}$ is given by \begin{equation*}
    \mathcal{F}_{\alpha}(W):=\mathcal{L}_{\alpha}(W,V)=\frac{1}{2}\mathbb{E}_{\mathbf x\sim \mathcal{N}(0,\mathbb I_k)}\Big(\sum_{i=1 }^{k}\sigma_{\alpha}({w_i}^T x)-\sum_{i=1}^{s}\sigma_{\alpha}({v_i}^T{x}) \Big)^2,
\end{equation*}

\begin{proposition} \label{prop:7}
Loss function $\mathcal{F}_{\alpha}(W)$ in equation \eqref{eq:Loss} has the explicit form:
\begin{equation*}
   \mathcal{F}_{\alpha}(W)=\frac{1}{2}\sum_{i,j=1}^{k}f_{\alpha}( w_i,  w_j)-\sum_{i=1}^{k}\sum_{j=1}^{s}f_{\alpha}( w_i,v_j)+\frac{1}{2}\sum_{i,j=1}^{s}f_{\alpha}(v_i, v_j),
\end{equation*}
where $f_{\alpha}$ is given by formula \eqref{eq: explicit_f}.
\end{proposition}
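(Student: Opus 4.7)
The plan is to expand the square inside the expectation and invoke linearity of expectation, then recognize each resulting term as a value of $f_\alpha$ by its very definition in equation \eqref{eq:expectation}.

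More concretely, I first abbreviate $A(x) := \sum_{i=1}^{k} \sigma_\alpha(w_i^T x)$ and $B(x) := \sum_{j=1}^{s} \sigma_\alpha(v_j^T x)$, so that
\[
\mathcal{F}_\alpha(W) \;=\; \tfrac{1}{2}\,\mathbb{E}_{x\sim \mathcal{N}(0,\mathbb{I}_k)}\bigl(A(x)-B(x)\bigr)^2 \;=\; \tfrac{1}{2}\mathbb{E}[A^2] - \mathbb{E}[AB] + \tfrac{1}{2}\mathbb{E}[B^2].
\]
Each of the three terms is then expanded as a double sum: for instance,
\[
\mathbb{E}[A(x)^2] \;=\; \sum_{i=1}^{k}\sum_{j=1}^{k} \mathbb{E}_{x\sim \mathcal{N}(0,\mathbb{I}_k)}\bigl[\sigma_\alpha(w_i^T x)\,\sigma_\alpha(w_j^T x)\bigr],
\]
with analogous expressions for $\mathbb{E}[AB]$ and $\mathbb{E}[B^2]$. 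By the definition \eqref{eq:expectation} of $f_\alpha$, each summand is precisely $f_\alpha(w_i,w_j)$, $f_\alpha(w_i,v_j)$, or $f_\alpha(v_i,v_j)$.

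Assembling the three pieces yields the claimed identity
\[
\mathcal{F}_\alpha(W) \;=\; \tfrac{1}{2}\sum_{i,j=1}^{k} f_\alpha(w_i,w_j) \;-\; \sum_{i=1}^{k}\sum_{j=1}^{s} f_\alpha(w_i,v_j) \;+\; \tfrac{1}{2}\sum_{i,j=1}^{s} f_\alpha(v_i,v_j).
\]

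There is essentially no obstacle here; the argument is purely bookkeeping combined with Fubini/linearity of expectation (justified because $\sigma_\alpha$ is Lipschitz and the Gaussian has finite second moments, so all the integrands are in $L^1$). If one wished to be fully pedantic, the only point worth checking is the interchange of sum and expectation, which follows from the dominated convergence theorem since $|\sigma_\alpha(w^T x)| \le \max\{1,|1-\alpha|\}\,\|w\|\,\|x\|$ and $\mathbb{E}\|x\|^2 = k$.
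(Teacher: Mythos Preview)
Your proof is correct and follows essentially the same approach as the paper's own proof: expand the square, apply linearity of expectation to pull the double sums outside, and identify each expectation as $f_\alpha$ by definition \eqref{eq:expectation}. Your additional remark on integrability justifying the interchange is a nice touch that the paper omits.
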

\begin{proof}
\begin{align*}
\mathcal{F}_{\alpha}(W)&=\frac{1}{2}\mathbb{E}_{x\sim \mathcal N(0,\mathbb{I}_k)}\Big[\Big(\sum_{i=1}^{k}\sigma_{\alpha}({ w_i}^T x)\Big)^2-2\sum_{i=1}^{k}\sum_{j=1}^{s}\sigma_{\alpha}({ w_i}^Tx)\sigma_{\alpha}({v_j}^T x)+\Big(\sum_{i=1}^{s}\sigma_{\alpha}({v_i}^Tx)\Big)^2\Big]\\
&=\frac{1}{2}\mathbb{E}_{x\sim \mathcal N(0,\mathbb{I}_k)}\Big[\sum_{i=1}^{k}\sum_{j=1}^{k}\sigma_{\alpha}({w_i}^Tx)\sigma_{\alpha}({w_j}^Tx)-2\sum_{i=1}^{k}\sum_{j=1}^{s}\sigma_{\alpha}({w_i}^T x)\sigma_{\alpha}({ v_j}^Tx)+\sum_{i=1}^{s}\sum_{j=1}^{s}\sigma_{\alpha}({ v_i}^Tx)\sigma_{\alpha}({v_j}^T x)\Big]\\
\end{align*}
Notice that expectation operator $\mathbb E:\mathbb R \to \mathbb{R}$ is linear, therefore
\begin{align*}
\mathcal{F}_{\alpha}(W)
&=\frac{1}{2}\sum_{i=1}^{k}\sum_{j=1}^{k}\mathbb{E}_{x\sim \mathcal N(0,\mathbb{I}_k)} \Big(\sigma_{\alpha}({ w_i}^Tx)\sigma_{\alpha}({w_j}^T x)\Big)-\sum_{i=1}^{k}\sum_{j=1}^{s}\mathbb{E}_{x\sim \mathcal N(0,\mathbb{I}_k)} \Big(\sigma_{\alpha}({w_i}^T x)\sigma_{\alpha}({v_j}^T x)\Big)\\
&+\frac{1}{2}\sum_{i=1}^{s}\sum_{j=1}^{s}\mathbb{E}_{x\sim \mathcal N(0,\mathbb{I}_k)} \Big(\sigma_{\alpha}({ v_i}^T x)\sigma_{\alpha}({ v_j}^T x)\Big)\\
&=\frac{1}{2}\sum_{i,j=1}^{k}f_{\alpha}( w_i,  w_j)-\sum_{i=1}^{k}\sum_{j=1}^{s}f_{\alpha}( w_i,v_j)+\frac{1}{2}\sum_{i,j=1}^{s}f_{\alpha}( v_i, v_j)
\end{align*}
\end{proof}
\subsection{Explicit Form of Gradient of Loss Function}\label{ap:derivation_gls}
For completeness, we summarize here the expressions for the loss function, its gradient, and related derivations following 
\begin{lemma}
    Let $\varphi_{\alpha} ( w):=f_{\alpha}( w,  v)$ in equation \eqref{eq: explicit_f}, then $\nabla_{ w} \varphi_{\alpha}:\mathbb{R}^k \setminus\{0\}\to \mathbb{R}^k$ is given by
\begin{equation}
    \nabla_{ w} \varphi_{\alpha}( w)=\frac{\sin \theta\| v\|}{2\pi \| w\|}  w+
    \frac{\pi-\theta}{2\pi} v
\end{equation}
\end{lemma}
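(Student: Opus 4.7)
The plan is to apply the chain rule, treating $\varphi_\alpha$ as a function of $w$ through the two scalars $\|w\|$ and $\theta$, with $v$ and $\|v\|$ held fixed. The single calculational shortcut that keeps the algebra under control is the observation that $\|w\|\cos\theta = (w\cdot v)/\|v\|$ is linear in $w$, so $\nabla_w(\|w\|\cos\theta) = v/\|v\|$ immediately, bypassing the need to split this term into separate gradients of $\|w\|$ and $\cos\theta$.

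The ordered steps I would execute are: (i) record $\nabla_w\|w\| = w/\|w\|$ and, from the quotient formula, $\nabla_w \cos\theta = (1/\|w\|)\bigl(v/\|v\| - \cos\theta\,w/\|w\|\bigr)$; (ii) derive $\nabla_w\theta = -(\sin\theta)^{-1}\nabla_w\cos\theta$ by differentiating $\theta = \cos^{-1}(\cdot)$; (iii) expand $\varphi_\alpha$ as a linear combination of the three monomials $\|w\|\sin\theta$, $\|w\|\theta\cos\theta$, and $\|w\|\cos\theta$, and apply the product rule to each, invoking the linear identity above every time a $\|w\|\cos\theta$ factor appears; (iv) collect coefficients of $w/\|w\|$ and of $v/\|v\|$ separately and multiply by the overall prefactor $\|v\|/(2\pi)$.

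The main obstacle is purely algebraic bookkeeping rather than any conceptual step: several intermediate contributions carry a factor of $\cot\theta$ and look singular as $\theta \to 0$, yet they cancel pairwise. Concretely, the $-\cot\theta\,(v/\|v\|)$ piece produced by differentiating $\|w\|\sin\theta$ cancels against the $+\cot\theta\,(v/\|v\|)$ piece from differentiating $\|w\|\theta\cos\theta$, while the two contributions proportional to $w/\|w\|$ collapse via the identity $(1-\cos^2\theta)/\sin\theta = \sin\theta$. What survives is precisely the $\sin\theta\,\|v\|/(2\pi\|w\|)\,w$ term plus a $v$-component affine in $\pi$ and $\theta$, yielding the stated formula. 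One must restrict to $\theta \in (0,\pi)$ (equivalently $w \nparallel v$) so that $\sin\theta \ne 0$ throughout; once that domain restriction is in force and signs are tracked consistently, no additional input beyond elementary multivariable calculus is required.
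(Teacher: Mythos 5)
Your proposal is correct and is essentially the paper's own argument: a direct chain-rule/product-rule computation built on $\nabla_w\|w\|=w/\|w\|$ and $\nabla_w\theta=-(\sin\theta)^{-1}\nabla_w\cos\theta$, with exactly the same pairwise cancellation of the apparently singular $\cot\theta$ terms and the collapse $(1-\cos^2\theta)/\sin\theta=\sin\theta$; your observation that $\|w\|\cos\theta=(w\cdot v)/\|v\|$ is linear in $w$ is a mild streamlining of the paper's longer simplification chain, not a different route. One caveat: if you literally differentiate \eqref{eq: explicit_f} with its coefficients $\alpha^2$ and $(2+\alpha^2-2\alpha)\pi$, the surviving terms are $\frac{\alpha^{2}\|v\|\sin\theta}{2\pi\|w\|}\,w+\frac{(2+\alpha^{2}-2\alpha)\pi-\alpha^{2}\theta}{2\pi}\,v$, which agrees with the stated formula only at $\alpha=1$; the paper's own proof tacitly differentiates the ReLU expression $\frac{\|w\|\|v\|}{2\pi}\bigl(\sin\theta+(\pi-\theta)\cos\theta\bigr)$ instead, so this mismatch originates in the lemma's statement (and the paper's inconsistent definition of $f_\alpha$) rather than in your method. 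Your restriction to $\theta\in(0,\pi)$, i.e.\ $w\nparallel v$, is likewise the hypothesis the paper uses implicitly.
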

\begin{proof}
Notice that
$\theta=\cos^{-1}\frac{ w\cdot  v}{\| w\|\| v\|}$, let $b=\frac{w\cdot  v}{\|w\|\| v\|},$ one has  
\begin{align*}
    \nabla_{w}\theta&=-\frac{1}{\sqrt{1-b^2}}\frac{\partial b}{\partial  w}\\&=\frac{1}{\sqrt{1-b^2}}\Big(\frac
    { w\cdot  v\cdot  w}{\| w\|^3\| v\|}-\frac{ v}{\|w\|\| v\|}\Big)
\end{align*}
and \begin{align*}
\nabla_{ w} \Big(\frac{\sin \theta\| v\|}{2\pi \|w\|}\Big)&=\frac{\| v\|}{2\pi}\nabla_{w}\|w\|=\frac{\| v\|}{2\pi}\nabla_{w}\sqrt{ w\cdot  w}\\&=\frac{2\| v\|w}{4\pi\|w\|}=\frac{\| v\|}{2\pi\|w\|} w,
\end{align*}
Therefore,
  \begin{align*}
    \nabla_{ w} \varphi_{\alpha}( w)&=\nabla_{w} \Big(\frac{\sin \theta\| v\|}{2\pi \|w\|}\Big)\Big(\sin\theta+(\pi-\theta)\cos\theta\Big)+ \frac{\|w\|\| v\|}{2\pi}\nabla_{ w}\Big(\sin\theta+(\pi-\theta)\cos\theta\Big)\\
&=\frac{\| v\|}{2\pi\| w\|} w\Big(\sin\theta+(\pi-\theta)\cos\theta\Big)+ \frac{\|w\|\| v\|}{2\pi}\nabla_{ w}\sin\theta+ \frac{\|w\|\| v\|}{2\pi}\nabla_{w}\Big((\pi-\theta)\cos\theta\Big)\\
&=\frac{\|v\|}{2\pi\| w\|}w\Big(\sin\theta+(\pi-\theta)\cos\theta\Big)+\frac{\| w\|\| v\|}{2\pi}\cos\theta\nabla_{w}\theta\\&+\frac{\| w\|\| v\|}{2\pi}\cos\theta\nabla_{ w}(\pi-\theta)+\frac{\|w\|\| v\|}{2\pi}(\pi-\theta)\nabla_{ w}\cos\theta\\
&=\frac{\|v\|}{2\pi\|w\|}w\Big(\sin\theta+(\pi-\theta)\cos\theta\Big)-\frac{\| w\|\| v\|}{2\pi}(\pi-\theta)\sin\theta\nabla_{ w}\theta\\
&=\frac{\sin\theta\| v\|}{2\pi\|w\|}w+\frac{\| v\| w(\pi-\theta)}{2\pi\| w\|}\cos\theta-\frac{\| v\|}{2\pi\| w\|}(\pi-\theta)\sin\theta\nabla_{ w}\theta\\
&=\frac{\sin\theta\| v\|}{2\pi\| w\|} w+\frac{\|v\| w(\pi-\theta)}{2\pi\|w\|}\cos\theta+\frac{(\pi-\theta)\sin\theta}{\sqrt{1-b^2}2\pi}\Big(-\frac{ w\cdot  v \cdot  w}{\| w\|^2}+v\Big)\\
&=\frac{\sin\theta\| v\|}{2\pi\|w\|}w+\frac{\|v\| w(\pi-\theta)}{2\pi\|w\|}\cos\theta-\frac{(\pi-\theta)}{2\pi}\frac{w\cdot  v\cdot  w}{\| w\|^2}+\frac{\pi-\theta}{2\pi} v\\
&=\frac{\sin\theta\| v\|}{2\pi\|w\|}w+\frac{\|v\| w(\pi-\theta)}{2\pi\|w\|}\cos\theta-\frac{(\pi-\theta)}{2\pi}\frac{w\cdot  v\cdot  w\| v\|}{\| w\|\| v\|\|w\|}+\frac{\pi-\theta}{2\pi} v\\
&=\frac{\sin\theta\| v\|}{2\pi\|w\|}w+\frac{\|f v\| w(\pi-\theta)}{2\pi\|w\|}\cos\theta-\frac{(\pi-\theta)}{2\pi}\cos\theta \frac{\|v\| w}{\| w\|}+\frac{\pi-\theta}{2\pi} v\\
&=\frac{\sin\theta\| v\|}{2\pi\|w\|} w+\frac{\pi-\theta}{2\pi} v
\end{align*}
\end{proof}\vskip0.6cm
\noi Define $\Omega:=\{W\in \mathbb R^{k^2}:u_i \neq 0,i\in\{1,\cdots,k\}\},$ assume  $V^o=\operatorname{vec}(\mathbb I_k).$ 
\begin{proposition} For $W\in \mathbb R^{k^2},$ the gradient
$\nabla_{W}\mathcal F_{\alpha}:\Omega\to \mathbb R^{k^2}$ is given by 
\[
\nabla_{W}F_{\alpha}(W)=
{\begin{bmatrix}
   \nabla_{ w^1}\mathcal F_{\alpha}(W),\nabla_{ w^2}\mathcal F_{\alpha}(W),\cdots, \nabla_{ w^k}\mathcal F_{\alpha}(W) 
\end{bmatrix}}^T,
\]
where
\begin{align*}\label{eq:explicit_lg}
\nabla_{w_i}\mathcal F_{\alpha}(W)&=\frac{1}{2\pi}\sum_{j=1}^{k}\Big(\frac{\| w_j\|\sin\theta_{ij}}{\| w_i\|} w_i+(\pi-\theta_{ij}) w_j\Big)\notag\\
&-\frac{1}{2\pi}\sum_{j=1}^{k}\Big(\frac{\sin\tilde{\theta}_{ij}}{\| w_i\|} w_i+(\pi-\tilde{\theta}_{ij}) v_j\Big), 
\end{align*}
$({\theta}_{ij}=\cos^{-1}\frac{ w_i\cdot  w_j}{\| w_i\|\| w_j\|}\text{ and } \tilde{\theta}_{ij}=\cos^{-1}\frac{\mathbf w_i\cdot  v_j}{\| w_i\|\| v_j\|})$
\end{proposition}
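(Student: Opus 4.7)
The plan is to differentiate the closed-form expression for $\mathcal F_\alpha(W)$ from Proposition~\ref{prop:7} term by term with respect to $w_i$, applying the preceding lemma (which supplies $\nabla_w f_\alpha(w,v)$) to each summand. The three terms of $\mathcal F_\alpha(W)$ split naturally into a student-student double sum $\tfrac12\sum_{p,q=1}^k f_\alpha(w_p,w_q)$, a student-teacher cross sum $-\sum_{p=1}^k\sum_{q=1}^s f_\alpha(w_p,v_q)$, and a purely teacher constant $\tfrac12\sum_{p,q=1}^s f_\alpha(v_p,v_q)$.

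First I would observe that the teacher term is independent of $W$ and hence contributes zero to $\nabla_{w_i}\mathcal F_\alpha$. Next, for the cross term, the variable $w_i$ enters only through $f_\alpha(w_i,v_j)$, so the lemma applied with $v=v_j$ gives $\nabla_{w_i}f_\alpha(w_i,v_j)=\frac{\sin\tilde\theta_{ij}\|v_j\|}{2\pi\|w_i\|}w_i+\frac{\pi-\tilde\theta_{ij}}{2\pi}v_j$. Because $v^o=\operatorname{vec}(\mathbb I_k)$ forces $\|v_j\|=1$, summing over $j$ and attaching the overall minus sign reproduces the second bracketed sum in the statement.

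The main bookkeeping step is the student-student term. In the double sum $\sum_{p,q=1}^k f_\alpha(w_p,w_q)$ the index $i$ appears both as the first argument (when $p=i$) and as the second argument (when $q=i$); by the symmetry $f_\alpha(w,v)=f_\alpha(v,w)$, both occurrences have identical derivatives in $w_i$. The two copies combine to cancel the prefactor $\tfrac12$, leaving
\[
\nabla_{w_i}\Bigl[\tfrac12\sum_{p,q=1}^k f_\alpha(w_p,w_q)\Bigr]=\sum_{j=1}^k \nabla_w f_\alpha(w,w_j)\Big|_{w=w_i},
\]
and applying the lemma with $v=w_j$ yields the first bracketed sum in the statement.

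The only mild obstacle is the degenerate diagonal case $j=i$, since the lemma was derived under the assumption that $w,v$ are non-parallel. At $j=i$ one has $\theta_{ii}=0$, and the formula collapses to $0\cdot w_i+\tfrac12 w_i=\tfrac12 w_i$; this matches what one gets by differentiating $f_\alpha(w_i,w_i)=\tfrac12\|w_i\|^2$ directly, so the formula extends by continuity to the diagonal. Collecting the three contributions gives the componentwise expression for $\nabla_{w_i}\mathcal F_\alpha$, and stacking the components into a column vector yields the proposition.
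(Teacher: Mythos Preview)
Your proposal is correct and follows essentially the same approach as the paper: split $\mathcal F_\alpha$ into the three sums, discard the constant teacher term, use the symmetry $f_\alpha(w,v)=f_\alpha(v,w)$ to merge the two occurrences of $w_i$ in the student--student double sum (cancelling the $\tfrac12$), and apply the lemma for $\nabla_w f_\alpha(w,v)$ termwise. Your treatment is in fact slightly more careful than the paper's, since you explicitly justify the diagonal case $j=i$ by continuity and note why $\|v_j\|=1$ drops out of the cross term.
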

\begin{proof}
    Given explicit form of loss function \eqref{eq:explicit_l}, one has
\begin{align*}
 \nabla_{w_i}\mathcal F_{\alpha}(W)&=\frac{1}{2}\sum_{j=1}^{k}\nabla_{ w_i}f_{\alpha}( w_i. w_j)+   \frac{1}{2}\sum_{j=1}^{k}\nabla_{ w_i}f_{\alpha}(w_j. w_i)-\sum_{j=1}^{k}\nabla_{ w_i}f_{\alpha}( w_i. v_j)\\
&=\sum_{j=1}^{k}\nabla_{ w_i}f_{\alpha}(w_i. w_j)-\sum_{j=1}^{k}\nabla_{ w_i}f_{\alpha}( w_i. v_j)\\
&=\frac{1}{2\pi}\sum_{j=1}^{k}\Big(\frac{\| w_j\|\sin\theta_{ij}}{\| w_i\|} w_i+(\pi-\theta_{ij}) w_j\Big)-\frac{1}{2\pi}\sum_{j=1}^{k}\Big(\frac{\sin\tilde{\theta}_{ij}}{\|w_i\|} w_i+(\pi-\tilde{\theta}_{ij}) v_j\Big)
\end{align*}
\end{proof}
\subsection{Derivation of Hessian \texorpdfstring{$\nabla^2_{u}\mathcal F_{\alpha}(u)$}{Hessian}}\label{app:hessian}
Let $x,y\in \mathbb R^k$ be two non-parallel vectors and $\hat{x}=\frac{x}{\|x\|},\; \hat{y}=\frac{y}{\|y\|}$. Let $\theta_{xy}\in (0,\pi)$ denotes the angle between them. Define
\begin{equation}\label{eq: Phi}
\Phi(x,y):=\|y\|\sin\theta_{xy}\hat{x}-\theta_{xy}y
\end{equation}
Then gradient formula \eqref{eq:lg} can be written as
\[
\nabla_{u_i}\mathcal F_{\alpha}(u)=\frac{\alpha}{2\pi}\sum_{j=1}^{k}\Phi(u_i,u_j)
-\frac{\alpha}{2\pi}\sum_{j=1}^{k}\Phi(u_i,v_j)+\frac{1}{2}\sum_{j=1}^{k}(u_j-v_j).
\]
To obtain $\nabla^2_u\mathcal F_{\alpha}(u)\in\mathbb R^{k^2}\times \mathbb R^{k^2},$ we carry out the following calculations:\vskip 0.3cm
\noindent 1)
Put $n_{xy}:=\hat{x}-\cos\theta_{xy}\hat{y},\quad \hat{n}_{xy}:=\frac{n_{xy}}{\|n_{xy}\|}.$
Then $\|n_{xy}\|=\sin\theta_{xy}.$ Indeed, 
\begin{align*}
\|n_{xy}\|^2&=n_{xy}^Tn_{xy}=\Big(\hat{x}^T-\cos\theta_{xy}\hat{y}^T\Big)\Big(\hat{x}-\cos\theta_{xy}\hat{y}\Big)\\
&=\hat{x}^T\hat{x}-2\cos\theta_{xy}\hat{x}^T\hat{y}+\cos^2\theta_{xy}\hat{y}^T\hat{y}=1-\cos^2\theta_{xy}=\sin^2\theta_{xy}.
\end{align*}
Notice $n_{xy}\perp \hat{y},\; n_{yx}\perp \hat{x}.$\vskip 0.3cm
\noindent 2) We next compute $\frac{d\hat{x}}{dx}.$ Let $r=\|x\|=(x^Tx)^{1/2}.$ So $\hat{x}=\frac{x}{r}$ and $d\hat{x}=\frac{1}{r}dx+xd(\frac{1}{r}),$
where $d(\frac{1}{r})=-r^{-2}dr=-r^{-3}x^Tdx.$ Therefore, $d\hat{x}=\frac{1}{r}dx-\frac{xx^T}{r^3}dx$, i.e.
\[
\frac{d\hat{x}}{dx}=\frac{1}{\|x\|}(\mathbb I_k-\hat{x}\hat{x}^T).
\]
\noindent 3) Next compute $\nabla_x\theta_{xy},\; \nabla_y\theta_{xy}.$ Let $c:=\cos\theta_{xy}=\hat{x}\cdot\hat{y},$ then $\theta_{xy}=\arccos c.$ So
\[
\nabla_x\theta_{xy}=-\frac{\nabla_x c}{\sqrt{1-c^2}}=-\frac{\nabla_x c}{\sin\theta_{xy}},
\]
where
\begin{align*}
\nabla_x c&=\nabla_x(\hat{x}\cdot \hat{y})=(\nabla_x\hat{x})\cdot \hat{y}\\
&=\frac{1}{\|x\|}(\mathbb I_k-\hat{x}\hat{x}^T)\cdot \hat{y}=\frac{1}{\|x\|}(\hat{y}-(\hat{x}\cdot \hat{y})\hat{x})=\frac{n_{yx}}{\|x\|}.
\end{align*}
Therefore, $\nabla_x\theta_{xy}=-\frac{\hat{n}_{yx}}{\|x\|},$ and similarly $\nabla_y\theta_{xy}=-\frac{\hat{n}_{xy}}{\|y\|}.$ \vskip 0.3cm
\noindent 4) Compute $\nabla_x\Big(\sin\theta_{xy}\Big),\; \nabla_y\Big(\sin\theta_{xy}\Big).$ Obviously, one has
\begin{align*}
&\nabla_x\Big(\sin\theta_{xy}\Big)=\cos\theta_{xy}\nabla_x(\theta_{xy})=-\frac{\cos\theta_{xy}\hat{n}_{yx}}{\|x\|}\\
&\nabla_y\Big(\sin\theta_{xy}\Big)=\cos\theta_{xy}\nabla_y(\theta_{xy})=-\frac{\cos\theta_{xy}\hat{n}_{xy}}{\|y\|}.
\end{align*}
\[
\nabla_x\theta_{xy}=\frac{\hat{n}_{xy}}{\|x\|},\quad \nabla_y\theta_{xy}=\frac{\hat{n}_{yx}}{\|y\|}.
\]
One also has $D_x\hat{x}=\frac{1}{\|x\|}(I-\hat{x}\hat{x}^T)$ and 
\[
\nabla_x(\sin\theta_{xy})=\cos\theta_{xy}\nabla_x\theta_{xy}=\frac{\cos\theta_{xy}}{\|x\|}\hat{n}_{xy},\;\quad \nabla_y(\sin\theta_{xy})=\cos\theta_{xy}\nabla_y\theta_{xy}=\frac{\cos\theta_{xy}}{\|y\|}\hat{n}_{yx}.
\]
\noindent 5) Recall $\Phi(x,y)$ in formula \eqref{eq: Phi}, we then define 
\begin{align*}
h_1(x,y):&=\frac{1}{2\pi}\Phi_x(x,y)=\frac{1}{2\pi}\Big(\|y\|\cos\theta_{xy}\cdot \nabla_x\theta_{xy}\;\hat{x}^T+\|y\|\sin\theta_{xy}\nabla_x\hat{x}-\nabla_x\theta_{xy}\; y^T\Big)
\\&=\frac{\sin\theta_{xy} \|y\|}{2\pi \|x\|}\Big(\mathbb I_k-\frac{xx^T}{\|x\|^2}+\hat{n}_{yx}\hat{n}_{yx}^T\Big)\\
    h_2(x,y):&=\frac{1}{2\pi}\Phi_y(x,y)=\frac{1}{2\pi}\Big(-\theta_{xy}\mathbb I_k+\frac{\hat{n}_{xy}y^T}{\|y\|}+\frac{\hat{n}_{yx}x^T}{\|x\|}\Big)
\end{align*}
Then general form of Hessian $\nabla^2_u\mathcal (u)$ in equation \eqref{eq:hessian} can be easily derived.

\subsection{Spectrum of Hessian \texorpdfstring{$\mathcal A_{\alpha}(v^o)$}{A(vo)}.}\label{ap:spectrum} Let $X=[X_1,\cdots, X_k]$ be $k\times k$ matrix where $X_j\in \mathbb R^k,\; j=1,\cdots,k.$ Define block operator $\mathcal L:\mathbb R^{k\times k}\to \mathbb R^{k\times k}$ given by
\[
(\mathcal LX)_i:=\sum_{j=1}^k A_{ij}X_j,\quad j=1,\cdots,k.
\]
Set $a=\frac{1}{2}-\frac{\alpha}{4},\; b=\frac{\alpha}{4},\;c=\frac{\alpha}{2\pi},$ and $J=\mathbf{1}\mathbf{1}^T,$  where $\mathbf{1}=(1,\cdots,1)^T\in\mathbb R^k.$ By \eqref{eq:spe_I}, one can easily derive
\begin{align}\label{eq:spectrum_com}
(\mathcal LX)_i&=A_{ii}X_i+\sum_{j\neq i}A_{ij}X_j=\frac{1}{2}X_i+\sum_{j\neq i}\Big(\frac{1}{2}-\frac{\alpha}{4}\Big)X_j+\sum_{j\neq i}\frac{\alpha}{2\pi}\Big(E_{ij}+E_{ji}\Big)X_j\notag\\
    &=\frac{1}{2}X_i+a\sum_{j\neq i}X_j+c\sum_{j\neq i}\Big(E_{ij}+E_{ji}\Big)X_j.
\end{align}
Notice, $\frac{1}{2}X_i+a\sum_{j\neq i}X_j=\frac{1}{2}X_i+a\Big(\sum_{j=1}^kX_j-X_i\Big),$ which can be written as $aX\mathbf{1}+bX_i.$ This also represents the $i$-th column of matrix
\[
X(aJ+b\mathbb I_k).
\]
On the other hand,
\begin{align*}
    \sum_{j\neq i}cE_{ij}X_j&=\sum_{j\neq i}ce_i {e_j}^T X_j=ce_i\sum_{j\neq i}{e_j}^TX_j=ce_i\sum_{j\neq i}X_{jj}=ce_i\Big(\operatorname{tr}(X)-X_{ii}\Big),\\
\sum_{j\neq i}cE_{ji}X_j&=c\sum_{j\neq i}e_j \Big({e_i}^T X_j\Big)=c\sum_{j\neq i}e_jX_{ij}=c\Big(\sum_{j=1}^k e_jX_{ij}-e_iX_{ii}\Big)=c\Big(X^Te_i-e_iX_{ii}\Big),    
\end{align*}
where $X_{ij}$ denotes the component of $i$-th row, $j$-th column of matrix $X.$ Therefore, $\sum_{j\neq i}\Big(cE_{ij}X_j+cE_{ji}X_j\Big)=c\Big[(X^T)e_i+e_i\Big(\operatorname{tr}(X)-2X_{ii}\Big)\Big],$ i.e. the $i$-th column of 
\[
c\Big(X^T+\operatorname{tr}(X)\mathbb I_k-2\operatorname{diag(X)}\Big).
\]
Therefore, $    T(X)=X\Big(aJ+b\mathbb I_k\Big)+c\Big(X^T+\operatorname{tr}(X)\mathbb I_k-2\operatorname{diag}(X)\Big).$

\section{Irreducible Representation of \texorpdfstring{$S_k$}{Sk}}\label{app:A2}
For the reader’s convenience, we provide a detailed derivation of decomposition \eqref{eq: decomposition} in this section. Since this is a standard result, we refer the readers to \cite{fulton2013representation} Chapter 4, for a comprehensive discussion. Given the well-known fact that (or see Example 4.6 in \cite{fulton2013representation}) for general $k,$ the standard representation $V_{\perp}$ of $V:=\mathbb R^{k^2}$ corresponds to partition $k=(k-1)+1,$ i.e. 
\[
V_{\perp}\cong S^{(k-1,1)}.
\]
We next introduce the Frobenius Character Formula
\begin{equation}\label{eq:frobenius}
\chi_\eta(C_i)
= 
\bigl[
\Delta(x)\,
\prod_{j} P_j(x)^{\,i_j}
\bigr]_{(l_1,\dots,l_r)},
\end{equation}
where
\begin{enumerate}
\item $\chi_\eta$ is the irreducible character of $S_k$ corresponding to the partition $\eta$.
\item $C_i$ is the conjugacy class determined by 
      \[
      i = (i_1, i_2, \dots,i_r), \qquad \sum_j j\,i_j = k,
      \]
      i.e.\ there are $i_1$ 1-cycles, $i_2$ 2-cycles, etc.
\item $P_j(x) = x_1^j + x_2^j + \cdots + x_r^j$ are the power sums in $r$ independent variables.
\item $\Delta(x) = \prod_{i<j}(x_i - x_j)$ is the Vandermonde determinant.
\item The bracket notation $[f(x)]_{(l_1,\dots,l_r)}$
      means take the coefficient of $x_1^{l_1}\cdots x_r^{l_r}$ in the expansion of $f(x)$.
\item The integers $l_j$ are defined from the partition $\eta = (\eta_1 \ge \cdots \ge \eta_r)$ by
      \[
      l_j = \eta_j + r - j.
      \]
\end{enumerate}
One can use this formula to explicitly compute the characters for several fundamental partitions.
\begin{proposition}\label{prop: frobenius} Let $k = \sum_j j\,i_j$, then we have
  \begin{enumerate}
      \item[(a)] If $\eta = (k),$ then $\chi_{(k)}(C_i) = 1.$
      \item[(b)] If $\eta = (k-1,1),$ then $\chi_{(k-1,1)}(C_i) = i_1 - 1.$ 
      \item[(c)] If $\eta= (k-2,1,1)$, then $\chi_{(k-2,1,1)}(C_i)=\frac{1}{2}(i_1-1)(i_1-2) - i_2.$
      \item[(d)] If $\eta = (k-2,2)$,
$\chi_{(k-2,2)}(C_i)
=
\frac{1}{2}(i_1-1)(i_1-2) + i_2 - 1.$
  \end{enumerate}  
\end{proposition}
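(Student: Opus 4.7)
The plan is to apply the Frobenius Character Formula \eqref{eq:frobenius} directly to each of the four partitions. For each partition $\eta$, the workflow is: determine the number of parts $r$, compute the shifted exponents $l_j=\eta_j+r-j$, write down the Vandermonde $\Delta(x)=\prod_{p<q}(x_p-x_q)$ in $r$ variables, and extract the coefficient of $x_1^{l_1}\cdots x_r^{l_r}$ in $\Delta(x)\,\prod_j P_j(x)^{i_j}$, where $P_j(x)=x_1^j+\cdots+x_r^j$. Since every monomial in $\prod_j P_j(x)^{i_j}$ has total degree $\sum_j j\,i_j=k$ and $\Delta$ is homogeneous of degree $\binom{r}{2}$, the target total degree $\sum l_j=k+\binom{r}{2}$ always matches, so only a bookkeeping of which monomials land in the desired multidegree is required.

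For part (a), $r=1$, $l_1=k$, and $\Delta(x)=1$; the product $\prod_j x_1^{j i_j}=x_1^k$ has coefficient $1$, so $\chi_{(k)}(C_i)=1$. For part (b), $r=2$ with $(l_1,l_2)=(k,1)$ and $\Delta=x_1-x_2$. I would expand $\prod_j(x_1^j+x_2^j)^{i_j}$ by choosing, in each factor $P_j^{i_j}$, how many copies of $x_2^j$ to take; the coefficient of $x_1^a x_2^b$ (with $a+b=k$) is $\prod_j \binom{i_j}{\mu_j}$ where $\sum j\mu_j=b$. Multiplying by $x_1-x_2$ and matching $(k,1)$ forces me to count, with sign, the ways to get exponent $(k-1,1)$ and $(k,0)$ from the product. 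Since only $1$-cycles contribute a single $x_2$ with no constraints elsewhere, the answer collapses to $i_1-1$, yielding (b).

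Part (d), $r=2$ with $(l_1,l_2)=(k-1,2)$, is analogous but now I must count how to get exponent $2$ in $x_2$: either two separate $1$-cycles contribute $x_2$ each (giving $\binom{i_1}{2}$), or one $2$-cycle contributes an $x_2^2$ (giving $i_2$). After incorporating the $x_1-x_2$ prefactor, I will get $\binom{i_1}{2}+i_2-(i_1-1)=\tfrac12(i_1-1)(i_1-2)+i_2-1$, which is (d). Part (c) is the most involved: here $r=3$, $(l_1,l_2,l_3)=(k,2,1)$, and $\Delta=(x_1-x_2)(x_1-x_3)(x_2-x_3)$ expands into six signed monomials. For each of those six, I would reduce to asking: what is the coefficient of the complementary monomial in $\prod_j(x_1^j+x_2^j+x_3^j)^{i_j}$? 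Each such coefficient is a sum of multinomials $\binom{i_j}{\mu_j^{(1)},\mu_j^{(2)},\mu_j^{(3)}}$ over admissible choices, and only $1$-cycles and $2$-cycles can contribute to such small exponents in $x_2,x_3$; this reduces each of the six terms to a small polynomial in $i_1$ and $i_2$.

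The main obstacle is the bookkeeping in part (c): assembling the six signed contributions and showing that everything collapses to $\tfrac12(i_1-1)(i_1-2)-i_2$ without arithmetic slips. I would organize this by grouping the six Vandermonde terms into three pairs by their $x_1$-exponent, so that each pair demands the same residual exponents in $x_2,x_3$ from the power-sum product, reducing the work to three independent coefficient extractions and a final algebraic simplification. Once (c) is established, (a), (b), (d) follow by the direct and shorter computations described above.
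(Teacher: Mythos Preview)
Your approach is exactly the paper's: apply the Frobenius formula \eqref{eq:frobenius} with the appropriate $r$ and $l$, expand $\Delta(x)$, and extract coefficients from $\prod_j P_j(x)^{i_j}$. In fact the paper only carries out (a) and (b) in full and merely records $r$ and $l$ for (c) and (d), so your sketch is more complete than the original. One small slip in your (d): the term you must subtract is $[\Phi]_{x_1^{k-1}x_2^{1}}=i_1$, not $i_1-1$ (you seem to have imported the \emph{character value} from (b) rather than the raw coefficient); with $i_1$ in place the identity $\binom{i_1}{2}+i_2-i_1=\tfrac12(i_1-1)(i_1-2)+i_2-1$ holds as claimed.
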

\begin{proof} By applying Frobenius Formula \eqref{eq:frobenius}, we have
\begin{enumerate}
\item[(a)] For $\eta = (k),$ we have $r=1$, $l_1 = k$ and $\Delta(x)=1$, $P_j(x)=x_1^j$.
Thus, $\chi_{(k)}(C_i) = 1$ for all $i$.
\item[(b)] For $\eta = (k-1,1)$. Then $r=2$ and $l=(k,1).$ Thus,
\[
\chi_{(k-1,1)}(C_i)
=
\Bigl[
(x_1-x_2)\,
\prod_j (x_1^j + x_2^j)^{i_j}
\Bigr]_{x_1^{k}x_2^{1}}.
\]
Let $\Phi(x_1,x_2):=\Bigr[\prod_j (x_1^j + x_2^j)^{i_j}
\Bigr]_{x_1^{k}x_2^{1}},$ then $
\chi_{(k-1,1)}(C_i)=\Bigr[x_1\Phi\Bigr]_{x_1^{k}x_2^{1}}-\Bigr[x_2\Phi\Bigr]_{x_1^{k}x_2^{1}}$ which is equivalent to 
\[
\Bigr[\Phi\Bigr]_{x_1^{k-1}x_2^{1}}-\Bigr[\Phi\Bigr]_{x_1^{k}x_2^{0}}.
\]
For $\Bigr[\Phi\Bigr]_{x_1^{k}x_2^{0}},$ one has for each $j,$ elements of $(x_1^j+x_2^j)^{i_j}$ are of the form 
\[
(x_1^j)^{a_j}(x_2^j)^{b_j},\;\;a_j+b_j=i_j.
\]
Therefore, 
\[
\Bigr[\Phi\Bigr]_{x_1^{k}x_2^{0}}=\Bigr[\prod_{j}\binom{i_j}{a_j}x_1^{\sum_j ja_j}x_2^{\sum_j jb_j}\Bigr]_{x_1^kx_2^0},
\]
which implies
\[
\sum_j ja_j = k,\quad
\sum_j jb_j=0.
\]
i.e. $b_j=0$ and $a_j=i_j,$ so $\Bigr[\Phi\Bigr]_{x_1^{k}x_2^{0}}=\binom{i_j}{i_j}=1.$ Similarly, $\Bigr[\Phi\Bigr]_{x_1^{k-1}x_2^{1}}=i_1.$
Therefore,
\[
\chi_{(k-1,1)}(C_i) = i_1 - 1.
\]
\item[(c)] For $\eta= (k-2,1,1)$, we have $r=3$ with $l = (k,2,1)$, and
\[
\chi_{(k-2,1,1)}(C_i)
=
\Bigl[
(x_1-x_2)(x_1-x_3)(x_2-x_3)
\prod_j (x_1^j + x_2^j + x_3^j)^{i_j}
\Bigr]_{x_1^{k}x_2^{2}x_3^{1}}.
\]
\item[(d)] For $\eta = (k-2,2),$ similarly, we have $r=2$ with $l = (k-1,2)$.
\end{enumerate}
\end{proof}
\vs
\noindent Therefore, for $g\in S_k,$ one has $\chi_{S^{(k-1,1)}}(g)=i_1(g)-1,$ where $i_1(g)$ denotes the number of 1-cycles in $g,$ i.e., the number of fixed points of $g.$ Moreover, it is straightforward to verify that 
\[
i_1(g^2)=i_1(g)+2i_2(g),
\]
where $i_2(g)$ is the number of 2-cycles in $g$.
On the other hand, using the standard character identities
\begin{align*}
\chi_{\operatorname{Sym}^2(V_{\perp})}(g)=\frac{1}{2}\Big(\chi_{V_{\perp}}(g)^2+\chi_{V_{\perp}}(g^2)\Big),\quad
\chi_{\wedge^2(V_{\perp})}(g)=\frac{1}{2}\Big(\chi_{V_{\perp}}(g)^2-\chi_{V_{\perp}}(g^2)\Big),
\end{align*}
we obtain
\[
\chi_{\operatorname{Sym}^2(V_{\perp})}=\frac{1}{2}\Big((i_1-1)^2+i_1+2i_2-1\Big),\;\;
    \chi_{\wedge^2(V_{\perp})}=\frac{1}{2}\Big((i_1-1)^2-(i_1+2i_2-1)\Big).
\]
By comparison with Proposition\eqref{prop: frobenius}, we derive the following decompositions
\begin{align*}
&\operatorname{Sym}^2(V_{\perp})\cong S^{(k)}\oplus S^{(k-1,1)}\oplus S^{(k-2,2)}\\
&\wedge^2(V_{\perp})\cong S^{(k-2,1,1)}.\notag 
\end{align*}
Equation \eqref{eq: decomposition} follows.
\section{Euler and Burnside Rings\label{app:B}}
In this section we assume that $G$ stands for a compact Lie group and we
denote by $\Phi(G)$ the set of all conjugacy classes $(H)$ of closed subgroups
$H$ of $G$. For any $(H)\in\Phi(G)$ we denote by $N(H)$ the normalizer of $H$
and by $W(H):=N(H)/H$ the Weyl's group of $H$. \vskip.3cm

Notice that $\Phi(G)$ admits a natural order relation given by
\begin{equation}
\label{eq:order}(K)\le(H) \;\; \Leftrightarrow\;\; \exists_{g\in G}\; \;
gKg^{-1}\subset H, \;\; \text{ for }\;\; (K),\,(H)\in\Phi(G).
\end{equation}
Moreover, we define for $n=0,1,2,\dots$ the following subsets $\Phi_{n}(G)$ of
$\Phi(G)$
\[
\Phi_{n}(G):=\{ (H)\in\Phi(G): \dim W(H)=n\}.
\]
\vskip.3cm \noi Let  
$U(G)={\mathbb{Z}}[\Phi(G)]$ be the free ${\mathbb{Z}}$-module generated by $\Phi(G)$, then an element $a\in U(G)$ is represented as
\begin{equation}
\label{eq:sum-a}a=\sum_{(L)\in\Phi(G)} n_{L} \, (L), \quad n_{L}\in
{\mathbb{Z}},
\end{equation}
where the integers $n_{L}=0$ except for a finite number of elements
$(L)\in\Phi(G)$. For such element $a\in U(G)$ and $(H)\in\Phi(G)$, we will
also use the notation
\begin{equation}
\label{eq:coeff-H}\text{coeff}^{H}(a)=n_{H},
\end{equation}
i.e. $n_{H}$ is the coefficient in \eqref{eq:sum-a} standing by $(H)$. 
\vskip.3cm

\begin{definition}
\textrm{\label{def:EulerRing} (cf. \cite{tD}) Define the
\textit{multiplication} on $U(G)$ as follows: for generators $(H)$,
$(K)\in\Phi(G)$ put:
\begin{equation}
(H)\ast(K)=\sum_{(L)\in\Phi(G)}n_{L}(L),\label{eq:Euler-mult}%
\end{equation}
where
\begin{equation}
n_{L}:=\chi_{c}((G/H\times G/K)_{L}/N(L)),\label{eq:Euler-coeff}%
\end{equation}
Note,  $(G/H\times G/K)_{L}$ denotes the set of elements in $G/H\times G/K$ that are
fixed exactly by $L$. $\chi_c(\cdot)$ denotes the Euler Characteristic (For its precise definition, see Section 3 of \cite{DK} and \cite{Spa}).} Moreover, the multiplication is extended linearly to the entire Euler ring 
$U(G)$. Then the free ${\mathbb{Z}}$-module $U(G)$ associated with
multiplication (\ref{eq:Euler-mult}) is called the \textit{Euler ring} of $G$.
\end{definition}

\vskip.3cm It is easy to notice that $(G)$ is the unit element in $U(G)$, i.e.
$(G)*a=a$ for all $a\in U(G)$. \vskip.3cm

\begin{lemma}
\label{lem:inv-1} Assume that $a\in U(G)$ is an invertible element and
$(H)\in\Phi(G)$. Then
\[
\text{\text{\rm coeff}}^{H}((H)*a)\not =0.
\]

\end{lemma}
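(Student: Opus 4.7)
The plan is to exploit the triangularity of the Euler ring multiplication with respect to the partial order $\le$ on $\Phi(G)$ defined in \eqref{eq:order}, and then invert by contradiction.

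First I would establish the following preliminary fact: for any $(K),(L)\in\Phi(G)$, if we write $(K)*(L)=\sum_{M}n_{M}(M)$, then $n_{M}\ne 0$ forces $(M)\le(K)$ and $(M)\le(L)$. This is immediate from \eqref{eq:Euler-coeff}: a class $(M)$ contributes only when $(G/K\times G/L)_{M}\ne\emptyset$, i.e., when there exists $(gK,g'L)$ whose isotropy is exactly a representative $M$, which forces $M\subset gKg^{-1}$ and $M\subset g'Lg'^{-1}$. I would also record the identity $(H)*(G)=(H)$, which follows from the same computation applied to $G/H\times G/G\cong G/H$ together with the fact that the points of $G/H$ with isotropy exactly $H$ form $N(H)/H$, and $(N(H)/H)/N(H)$ is a single point with Euler characteristic $1$.

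With these two facts in hand, the proof is a one-step contradiction. Let $b\in U(G)$ be the inverse of $a$. Suppose for contradiction that $\text{coeff}^{H}((H)*a)=0$. Expanding $a=\sum_{K}a_{K}(K)$ and applying the triangularity fact termwise in
\[
(H)*a=\sum_{K}a_{K}\,(H)*(K),
\]
every orbit type $(L)$ appearing in $(H)*a$ satisfies $(L)\le(H)$. Combined with the assumed vanishing of the $(H)$-coefficient, one has
\[
(H)*a=\sum_{(L)<(H)}c_{L}(L).
\]
Multiplying both sides by $b$ on the right (using commutativity of $U(G)$, which follows from the swap isomorphism $G/K\times G/L\cong G/L\times G/K$, and the identity $(H)*(G)=(H)$) yields
\[
(H)=(H)*(a*b)=\bigl((H)*a\bigr)*b=\sum_{(L)<(H)}c_{L}\,(L)*b.
\]
For each $(L)<(H)$, triangularity again forces every $(M)$ occurring in $(L)*b$ to satisfy $(M)\le(L)<(H)$. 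Hence the right-hand side contains no $(H)$-component, while the left-hand side has coefficient $1$ at $(H)$. This contradiction completes the proof.

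The main conceptual step is the triangularity lemma; everything else is essentially bookkeeping. The only care needed is to verify that isotropy on a product $G$-space is subconjugate to the stabilizers of each factor, which is a routine consequence of the definition of isotropy for diagonal actions; after that the contradiction step is automatic.
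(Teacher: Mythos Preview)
Your proof is correct and follows essentially the same argument as the paper: use the triangularity of the Euler ring product with respect to the partial order on $\Phi(G)$, assume the $(H)$-coefficient of $(H)*a$ vanishes so that only strictly smaller orbit types appear, and then multiply by $a^{-1}$ to reach the contradiction $(H)=(H)*(G)=(H)*a*a^{-1}$ having no $(H)$-component. Your write-up is more explicit about why triangularity and the identity $(H)*(G)=(H)$ hold, but the strategy is identical.
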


\begin{proof}
Suppose that
\[
a=\sum_{(L)\in\Phi(a)} n_{L} \, (L).
\]
Then
\[
(H)*a=\sum_{(K)\in\Phi((H)*a)} m_{K}\, (K), \text{ and formula
(\ref{eq:Euler-mult}) implies that} \;\; (H)\ge(K).
\]
Assume for contradiction that $(H)>(K)$ for all $(K)\in\Phi((H)*a)$. Then, by
exactly the same argument we have
\[
(H)*a*a^{-1}=\sum_{(L)\in\Phi((H)*a*a^{-1})} n_{L} \, (L), \quad\text{ where }
(H)>(L),
\]
which is a contradiction with the fact that
\[
(H)*a*a^{-1}=(H)*(G)=(H).
\]
\end{proof}


\vskip.3cm Take $\Phi_{0}(G)=\{(H)\in\Phi(G):$ \textrm{dim\thinspace
}$W(H)=0\}$ and denote by $A(G)={\mathbb{Z}}[\Phi_{0}(G)]$ a free
${\mathbb{Z}}$-module with basis $\Phi_{0}(G).$ Define multiplication on
$A(G)$ by restricting multiplication from $U(G)$ to $A(G)$, i.e. for $(H)$,
$(K)\in\Phi_{0}(G)$ let
\begin{equation}
(H)\cdot(K)=\sum_{(L)}n_{L}(L),\qquad(H),(K),(L)\in\Phi_{0}(G),\text{ where}
\label{eq:multBurnside}%
\end{equation}
\begin{equation}
n_{L}=\chi((G/H\times G/K)_{L}/N(L))=|(G/H\times G/K)_{L}/N(L)|
\label{eq:CoeffBurnside}%
\end{equation}
($\chi$ here denotes the usual Euler characteristic). Then $A(G)$ with
multiplication \eqref{eq:multBurnside} becomes a ring which is called the
\textit{Burnside ring} of $G$. As it can be shown, the coefficients
\eqref{eq:CoeffBurnside} can be found using the following
recursive formula:
\begin{equation}
n_{L}=\frac{n(L,K)|W(K)|n(L,H)|W(H)|-\sum_{(\widetilde{L})>(L)}n(L,\widetilde
{L})n_{\widetilde{L}}|W(\widetilde{L})|}{|W(L)|}, \label{eq:rec-coef}%
\end{equation}
where $(H),$ $(K),$ $(L)$ and $(\widetilde{L})$ are taken from $\Phi_{0}
(G),$ and \begin{align*}
&
N_{G}(L,H)=\Big\{
g\in G:gLg^{-1} \subset H\Big\} , \\
&N_{G}(L,H)/H=\Big\{
Hg: g\in N_{G}(L,H)\Big\}\\
& n(L,H)=\Big|\frac{N(L,H)}{N(H)}\Big|
\end{align*}
\vskip.3cm Observe that although $A(G)$ is clearly a ${\mathbb{Z}}$-submodule
of $U(G)$, in general, it is \textbf{not} a subring of $U(G)$.

\vskip.3cm Define $\pi_{0}:U(G)\rightarrow A(G)$ as follows: for $(H)\in
\Phi(G)$ let
\begin{equation}
\pi_{0}((H))=
\begin{cases}
(H) & \text{ if }\;(H)\in\Phi_{0}(G),\\
0 & \text{ otherwise.}%
\end{cases}
\label{eq:pi_0-homomorphism}%
\end{equation}
The map $\pi_{0}$ defined by (\ref{eq:pi_0-homomorphism}) is a ring
homomorphism {(cf. \cite{BKR})}, i.e.
\[
\pi_{0}((H)\ast(K))=\pi_{0}((H))\cdot\pi_{0}((K)),\qquad(H),(K)\in\Phi(G).
\]
The following well-known result (cf. \cite{tD}, Proposition 1.14, page 231)
shows a difference between the generators $(H)$ of $U(G)$ and $A(G)$.

\begin{proposition}
\label{pro:nilp-elements} Let $(H)\in\Phi_{n}(G)$.

\begin{itemize}
\item[(i)] If $n>0$, then $(H)^{k}=0$ in $U(G)$ for some $k\in\mathbb{N}$,
i.e. $(H)$ is a nilpotent element in $U(G)$;

\item[(ii)] If $n=0$, then $(H)^{k}\not =0$ for all $k\in{\mathbb{N}}$.
\end{itemize}
\end{proposition}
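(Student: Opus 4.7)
The plan is to reduce both parts to an explicit computation of the coefficient of $(H)$ in $(H)^{k}$ via the structure constants \eqref{eq:Euler-mult}--\eqref{eq:Euler-coeff}, and then handle (ii) using the ring homomorphism $\pi_{0}$ and (i) using Euler-characteristic vanishing together with an inductive nilpotency argument.

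First I would establish the geometric identification
\[
((G/H)^{k})_{H}/N(H)\;\cong\;W(H)^{k-1}.
\]
Indeed, $gH\in(G/H)^{H}$ iff $g^{-1}Hg\subset H$, which by the compactness of $H$ forces equality, so $g\in N(H)$ and $(G/H)^{H}=W(H)$. For any tuple $(g_{1}H,\dots,g_{k}H)$ with each $g_{i}\in N(H)$, the isotropy is $\bigcap_{i} g_{i}Hg_{i}^{-1}=H$, so the $H$-fixed set coincides with the exact-isotropy stratum: $((G/H)^{k})_{H}=W(H)^{k}$. The residual $N(H)$-action descends to the diagonal $W(H)$-action, whose quotient is identified with $W(H)^{k-1}$ via $(u_{1},\dots,u_{k})\mapsto(u_{1}^{-1}u_{2},\dots,u_{1}^{-1}u_{k})$. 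Consequently the coefficient of $(H)$ in $(H)^{k}$ equals $\chi_{c}(W(H))^{k-1}$.

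For part (ii), $(H)\in\Phi_{0}(G)$ means $W(H)$ is finite of order $|W(H)|\geq 1$. Since $\pi_{0}:U(G)\to A(G)$ is a ring homomorphism with $\pi_{0}((H))=(H)$, it suffices to show $(H)^{k}\neq 0$ in $A(G)$; and by the computation above, the coefficient of $(H)$ in $(H)^{k}$ inside $A(G)$ is $|W(H)|^{k-1}>0$, so $(H)^{k}\neq 0$.

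For part (i), $\dim W(H)=n>0$ forces $\chi(W(H))=0$: the identity component of $W(H)$ contains a one-parameter subgroup $S^{1}$ acting freely on $W(H)$ by left translation, and the resulting principal $S^{1}$-bundle gives $\chi(W(H))=\chi(S^{1})\chi(W(H)/S^{1})=0$. Hence the coefficient of $(H)$ in $(H)^{k}$ vanishes for every $k\geq 2$, and $(H)^{k}$ is supported on orbit types $(L)<(H)$. The main obstacle is then promoting this \emph{top-coefficient vanishing} to outright nilpotency of $(H)$ itself. I would proceed by a downward induction on the finite subposet of orbit types appearing in the powers $(H)^{k}$, where finiteness is guaranteed by the Mostow--Palais slice theorem applied to the iterated products $G/H\times\cdots\times G/H$. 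At each stage, any new maximal orbit type $(L)$ occurring with nonzero coefficient either satisfies $\dim W(L)>0$, in which case an analogous Euler-characteristic computation kills its self-contribution in subsequent products, or it is already of finite Weyl type and can be absorbed via the same structural argument. Closing this induction cleanly is the subtle step, and is where one typically invokes tom Dieck's structural result that the ideal in $U(G)$ generated by classes with positive-dimensional Weyl groups is a nil ideal.
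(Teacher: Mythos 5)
Your treatment of part (ii) is correct and complete: the identification $((G/H)^{k})_{H}/N(H)\cong W(H)^{k-1}$ gives $\operatorname{coeff}^{H}((H)^{k})=|W(H)|^{k-1}\neq 0$ when $W(H)$ is finite, which already shows $(H)^{k}\neq 0$ in $U(G)$ (the detour through $\pi_{0}$ is not even needed). This matches the standard argument.

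Part (i), however, has a genuine gap, and you have in effect flagged it yourself. Showing $\operatorname{coeff}^{H}((H)^{k})=\chi(W(H))^{k-1}=0$ only tells you that $(H)^{k}$ is supported on orbit types strictly below $(H)$; it does not give nilpotency. Your proposed way of closing the induction --- sorting a new maximal type $(L)$ by whether $\dim W(L)>0$, and otherwise invoking ``tom Dieck's structural result that classes with positive-dimensional Weyl group generate a nil ideal'' --- is circular, since that structural result \emph{is} statement (i). The correct closing step (tom Dieck's argument, and the one the paper intends) computes a different Euler characteristic: write $(H)^{n}=\sum_{(K)}\alpha_{K}(K)$, pick $(L)$ maximal among the types with $\alpha_{K}\neq 0$, and note that in $(H)^{n+1}=(H)^{n}\ast(H)$ the coefficient of $(L)$ can only arise from $\alpha_{L}(L)\ast(H)$ and equals $\alpha_{L}\,\chi\big((G/H)^{L}\big)$. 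The fixed-point set $(G/H)^{L}$ carries a free action of $W(H)$ by right translations; since $\dim W(H)>0$, it contains a circle acting freely, so $\chi\big((G/H)^{L}\big)=0$ \emph{independently of} $\dim W(L)$. Hence every maximal type present in $(H)^{n}$ disappears in $(H)^{n+1}$, and since the set of orbit types occurring in all powers of $(H)$ is finite, the downward induction terminates with $(H)^{k}=0$. Your proof needs this computation of $\chi((G/H)^{L})$ --- driven by positivity of $\dim W(H)$, not of $\dim W(L)$ --- to go through.
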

\begin{corollary}
    If $\alpha=n_1(L_1)+n_2(L_2)+\cdots +n_k(L_k),$ where $\dim W(L_j)\ge 0,$ then there exists $n\in \mathbb N$ s.t. $\alpha^n=0.$
\end{corollary}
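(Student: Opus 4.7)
The plan is to combine Proposition~\ref{pro:nilp-elements}(i) with the commutativity of the Euler ring, reducing nilpotency of the sum to nilpotency of each summand. I first note that the hypothesis $\dim W(L_j)\ge 0$ is vacuous as written and is almost certainly meant to read $\dim W(L_j)>0$; otherwise the claim would contradict Proposition~\ref{pro:nilp-elements}(ii) (for instance $(L_j)=(G)\in\Phi_0(G)$, the multiplicative unit of $U(G)$, cannot be nilpotent). Under the corrected reading, every generator occurring in $\alpha$ has positive-dimensional Weyl group.

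First, for each $j=1,\dots,k$, Proposition~\ref{pro:nilp-elements}(i) furnishes an exponent $m_j\in\mathbb N$ with $(L_j)^{m_j}=0$ in $U(G)$. Next I verify that $U(G)$ is commutative: this is immediate from Definition~\ref{def:EulerRing}, since the fixed-point set $(G/H\times G/K)_L/N(L)$ used in \eqref{eq:Euler-coeff} is homeomorphic to $(G/K\times G/H)_L/N(L)$ via the swap map, yielding the same Euler characteristic and hence $(H)*(K)=(K)*(H)$. Consequently each scalar multiple is nilpotent: $\bigl(n_j(L_j)\bigr)^{m_j}=n_j^{m_j}(L_j)^{m_j}=0$.

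Finally, I apply the standard multinomial--pigeonhole argument for commuting nilpotents. Setting $N:=\sum_{j=1}^{k}(m_j-1)+1$ and expanding,
\[
\alpha^{N}=\Bigl(\sum_{j=1}^{k}n_j(L_j)\Bigr)^{N}=\sum_{p_1+\cdots+p_k=N}\binom{N}{p_1,\dots,p_k}\,n_1^{p_1}\cdots n_k^{p_k}\,(L_1)^{p_1}*\cdots*(L_k)^{p_k},
\]
any tuple $(p_1,\dots,p_k)$ with $\sum_j p_j=N$ must satisfy $p_j\ge m_j$ for at least one index $j$ (otherwise $\sum_j p_j\le \sum_j(m_j-1)=N-1$). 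Therefore every monomial in the expansion vanishes, giving $\alpha^N=0$.

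I anticipate no substantive obstacle: the only delicate points are flagging the likely typo in the hypothesis and justifying commutativity of $U(G)$ directly from the structure constants~\eqref{eq:Euler-coeff}; both are routine once identified. The remainder is pure bookkeeping with the multinomial theorem.
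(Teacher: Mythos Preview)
Your proposal is correct and follows essentially the same route as the paper: both reduce to the fact that each $(L_j)$ is nilpotent by Proposition~\ref{pro:nilp-elements}(i) and then use commutativity to conclude that the sum is nilpotent. The paper does this by induction on the number of summands together with a binomial expansion, whereas you do it in one shot via the multinomial expansion and pigeonhole; your version is slightly cleaner in that you explicitly flag the typo in the hypothesis and justify commutativity of $U(G)$, but the underlying argument is the same.
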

\begin{proof}
    By induction w.r.t. $k\in \mathbb N$, clearly for $k=1,$ it is exactly the statement of proposition\eqref{pro:nilp-elements}. Suppose that the statement is true for $k\geq 1,$ and will show that it is also true for $k+1.$ Indeed, we have
    \[
\alpha^1=n_1(L_1)+n_2(L_2)+\cdots n_k(L_k)+n_{k+1}(L_{k+1})
    =\alpha+n_{k+1}(L_{k+1}),
    \]
so \[
\alpha^m=\sum_{l=0}^{m}C_m^l \alpha^l n_k^{m-l}(L_{k+1})^{m-l}.
\]
Let $k$ be given by Proposition \eqref{pro:nilp-elements}, for $L:=l_{k+1},$ then for $m\ge n+k,$ one has 
\[
\alpha^l (L)^{m-l}=0
\]
    \end{proof}
\vskip.3cm 
\black
\section{Properties of \texorpdfstring{$G$}{G}-Equivariant Gradient Degree}\label{app:C}

In what follows, we assume that $G$ is a compact Lie group.

\subsection{Brouwer $G$-Equivariant Degree}
Assume that \texorpdfstring{$V$}{V} is an orthogonal $G$-representation and $\Omega\subset V$ an
open bounded $G$-invariant set. A $G$-equivariant (continuous) map $f:V\to V$
is called \textit{$\Omega$-admissible} if $f(x)\neq0$ for any $x\in
\partial\Omega$; in such a case, the pair $(f,\Omega)$ is called
\textit{$G$-admissible}. Denote by $\mathcal{M}^{G}(V,V)$ the set of all such
admissible $G$-pairs, and put $\mathcal{M}^{G}:=\bigcup_{V}\mathcal{M}
^{G}(V,V)$, where the union is taken for all orthogonal $G$-representations
$V$. We have the following result:

\begin{definition}
\label{thm:GpropDeg} There exists a unique map $G$-$\deg:\mathcal{M}^{G}\to
A(G)$, which assigns to every admissible $G$-pair $(f,\Omega)$ an element
$G\mbox{\rm -}\deg(f,\Omega)\in A(G)$, called the \textit{$G$-equivariant
degree} (or simply \textit{$G$-degree}) of $f$ on $\Omega$:
\begin{equation}
\label{eq:G-deg0}G\mbox{\rm -}\deg(f,\Omega)=\sum_{(H_{i})\in\Phi_{0}
(G)}n_{H_{i}}(H_{i})= n_{H_{1}}(H_{1})+\dots+n_{H_{m}}(H_{m}).
\end{equation}
It satisfies the properties of additivity, homotopy, normalization, as well as existence, product, suspension, recurrence formula, etc. (see \cite{balanov2025degree} for details).
We call $G\mbox{\rm -}\deg(f,\Omega)$ the \textit{$G$-equivariant
degree} (or simply \textit{$G$-degree}) of $f$ on $\Omega$.
\end{definition}\vs 
\begin{definition} The Brouwer $G$-equivariant degree
\begin{equation} \label{eq:basicdeg}
\deg_{\cV_i} := G\text{-}\deg(-\id, B(\cV_i)),
\end{equation}
is called the {\it $\cV_i$-basic degree} (or simply {\it basic degree}), and it can be computed by: $\deg_{\mathcal{V} _{i}}=\sum_{(K)}n_{K}(K),$
where
\begin{align}\label{eq:formula}
n_{K}=\frac{(-1)^{\dim\mathcal{V} _{i}^{K}}- \sum_{K<L}{n_{L}\, n(K,L)\, \left|  W(L)\right|  }}{\left|  W(K)\right|  }.
\end{align}
\end{definition}

\begin{lemma}\label{le:basic_coefficient}
If for $ (K_o) \in \Phi_0(G)$, one has $\text{coeff}^{L_o} (\deg_{\cV_i})$ is a leading coefficient of $\deg_{\cV_i}$, then $\dim (\cV_i ^{K_o})$ is odd and
\[
\text{coeff}^{K_o}(\deg_{\cV_i})=
\begin{cases}\label{eq:a0}
-1 & \text{if} \; |W(K_o)|=2,\\
-2 & \text{if}\; |W(K_o)|=1;\\
\end{cases}
\]
\end{lemma}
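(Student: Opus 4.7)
The argument proceeds directly from the recurrence formula \eqref{eq:formula} applied to a maximal orbit type in the support of $\deg_{\cV_i}$. Interpret ``leading coefficient'' in the natural way: $(K_o)\ne(G)$ and no $(L)$ with $(K_o)<(L)<(G)$ satisfies $n_L\ne 0$. For a nontrivial irreducible orthogonal $G$-representation $\cV_i$, the $G$-fixed subspace $\cV_i^G$ is trivial, so $(-1)^{\dim \cV_i^G}=1$, and \eqref{eq:formula} at the top of the subgroup lattice returns $n_G=1$. Since $(K_o)$ is assumed maximal in the support below $(G)$, the sum $\sum_{K_o<L}n_L\,n(K_o,L)\,|W(L)|$ in \eqref{eq:formula} collapses to the single contribution coming from $L=G$, for which $n(K_o,G)=|W(G)|=1$.

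\textbf{Extraction of the coefficient.} Substituting these observations into \eqref{eq:formula} yields the clean identity
\[
n_{K_o}\,|W(K_o)|\ =\ (-1)^{\dim \cV_i^{K_o}}-1.
\]
Because $(K_o)$ lies in the support, $n_{K_o}\neq 0$, so the right-hand side must be nonzero; this forces $(-1)^{\dim \cV_i^{K_o}}=-1$, i.e.\ $\dim \cV_i^{K_o}$ is odd, which establishes the first assertion of the lemma. The identity then reduces to $n_{K_o}\,|W(K_o)|=-2$, so $|W(K_o)|$ must divide $2$. Since $(K_o)\in\Phi_0(G)$ guarantees $W(K_o)$ is finite, the only admissible orders are $1$ and $2$, yielding $n_{K_o}=-2$ and $n_{K_o}=-1$ respectively, matching the case distinction in the statement.

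\textbf{Main obstacle.} The only substantive point is codifying ``leading coefficient'' so that the sum in the recurrence really does reduce to the $L=G$ term; once that is in place the proof is a two-line manipulation followed by an integrality argument. The integrality of $n_{K_o}\in\mathbb{Z}$ carries all the arithmetic content: it is what constrains $|W(K_o)|$ to the two values $\{1,2\}$ from the divisibility $|W(K_o)|\mid 2$, and without it the classification into two cases would be lost. A minor preliminary is the verification $n_G=(-1)^{\dim \cV_i^G}=1$, which uses the standard fact that the $G$-invariants of a nontrivial irreducible $G$-representation vanish. Everything else is bookkeeping from definition \eqref{eq:basicdeg} and formula \eqref{eq:formula}.
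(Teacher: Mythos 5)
Your proof is correct. The paper states Lemma~\ref{le:basic_coefficient} without proof (it is imported from the equivariant degree literature), and your derivation is exactly the standard intended argument: apply the recurrence \eqref{eq:formula} at a maximal orbit type $(K_o)$ in the support below $(G)$, where the sum reduces to the $L=G$ term with $n_G=(-1)^{\dim\cV_i^G}=1$, $n(K_o,G)=|W(K_o,G)/N(G)|=1$ and $|W(G)|=1$, giving $n_{K_o}|W(K_o)|=(-1)^{\dim\cV_i^{K_o}}-1$; nonvanishing of $n_{K_o}$ then forces $\dim\cV_i^{K_o}$ odd and the integrality/divisibility argument pins down the two cases. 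Your reading of ``leading coefficient'' (the statement's $L_o$ is a typo for $K_o$) is the intended one, and your observation that $|W(K_o)|\in\{1,2\}$ is forced, rather than assumed, is a small bonus.
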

\begin{lemma}\label{le:involutive}
    For each  $\mathcal V_i$,  the corresponding basic degree $\deg_{\mathcal V_i} \in A(G)$ is an involution in the Burnside ring. It satisfies
    \[
    (\deg_{\cV_i})^2=\deg_{\cV_i} \cdot \deg_{\cV_i}=(G).
    \]\vs
\end{lemma}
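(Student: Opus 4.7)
The plan is to reduce the claim to two standard properties of the equivariant degree: the product (multiplicativity) property and homotopy invariance, together with the normalization $G\text{-}\deg(\id,B(V))=(G)$. Once the product of two copies of $\deg_{\mathcal V_i}$ is identified with the degree of $-\id$ on a ball in $\mathcal V_i\oplus\mathcal V_i$, the result reduces to exhibiting a $G$-equivariant, admissible homotopy from $-\id$ to $\id$ on that doubled representation.

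First, by the definition $\deg_{\mathcal V_i}=G\text{-}\deg(-\id,B(\mathcal V_i))$ and the product (multiplicativity) property of the Brouwer $G$-equivariant degree, I have
\[
\deg_{\mathcal V_i}\cdot\deg_{\mathcal V_i}
= G\text{-}\deg\!\bigl(-\id\times -\id,\; B(\mathcal V_i)\times B(\mathcal V_i)\bigr).
\]
The only zero of the pair map $-\id\times -\id$ lies at the origin, so by the excision/contraction property of the $G$-degree one may replace the product of balls by any $G$-invariant bounded open neighborhood of $0$ in $\mathcal V_i\oplus\mathcal V_i$ whose boundary avoids $0$; in particular,
\[
G\text{-}\deg\!\bigl(-\id\times -\id,\; B(\mathcal V_i)\times B(\mathcal V_i)\bigr)
= G\text{-}\deg\!\bigl(-\id,\; B(\mathcal V_i\oplus\mathcal V_i)\bigr).
\]

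Next, on the doubled representation $\mathcal V_i\oplus\mathcal V_i$ I construct the $G$-equivariant homotopy
\[
h_t(v,w):=\bigl(\cos(\pi t)\,v-\sin(\pi t)\,w,\;\sin(\pi t)\,v+\cos(\pi t)\,w\bigr),\qquad t\in[0,1],
\]
which satisfies $h_0=\id$ and $h_1=-\id$. Because $G$ acts diagonally on $\mathcal V_i\oplus\mathcal V_i$ and the rotation mixes the two summands with scalar coefficients in $t$, the map $h_t$ commutes with $G$. Moreover, each $h_t$ is an orthogonal isomorphism of $\mathcal V_i\oplus\mathcal V_i$, hence $h_t(x)\neq 0$ for all $x$ on the boundary sphere, so the homotopy is admissible on $B(\mathcal V_i\oplus\mathcal V_i)$. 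Homotopy invariance together with the normalization property therefore yields
\[
G\text{-}\deg\!\bigl(-\id,\; B(\mathcal V_i\oplus\mathcal V_i)\bigr)
= G\text{-}\deg\!\bigl(\id,\; B(\mathcal V_i\oplus\mathcal V_i)\bigr)
= (G),
\]
which is the assertion.

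The only potential obstacle is bookkeeping: verifying that the excision step legitimately identifies the product-of-balls domain with a ball in the direct sum, and checking that the rotation homotopy truly commutes with the diagonal $G$-action (rather than with two independent actions). Both are routine once the diagonal action on $\mathcal V_i\oplus\mathcal V_i$ is fixed and the orthogonality of each $h_t$ is observed, so I expect no genuine technical difficulty beyond careful statement of the product/excision properties from Definition~\ref{thm:GpropDeg}.
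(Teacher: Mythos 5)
Your argument is correct. The paper itself states Lemma~\ref{le:involutive} without proof (it is invoked as a known fact, e.g.\ in the proof of Proposition~\ref{prop:invertible}), and what you give is the standard justification: the product property converts $\deg_{\mathcal V_i}\cdot\deg_{\mathcal V_i}$ into $G\text{-}\deg(-\id,B(\mathcal V_i\oplus\mathcal V_i))$ (the passage from $B(\mathcal V_i)\times B(\mathcal V_i)$ to $B(\mathcal V_i\oplus\mathcal V_i)$ is legitimate by excision, since $0$ is the only zero), and the planar rotation $h_t=R_{\pi t}\otimes\id_{\mathcal V_i}$ is a $G$-equivariant, boundary-admissible linear homotopy from $\id$ to $-\id$ on the doubled representation, so homotopy invariance and normalization give $(G)$. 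Note that since the lemma lives in the Burnside ring $A(G)$ (Brouwer-type degree), an ordinary admissible $G$-homotopy suffices and you need not worry that $R_{\pi t}\otimes\id$ fails to be symmetric. An equivalent, purely combinatorial route is to observe that $\dim(\mathcal V_i\oplus\mathcal V_i)^K=2\dim\mathcal V_i^K$ is even for every $(K)$, so the recurrence formula \eqref{eq:formula} forces $n_G=1$ and, inductively, $n_K=0$ for all $(K)<(G)$; your geometric argument and this computation buy the same conclusion, with yours being the more conceptual of the two.
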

\vskip.3cm


\subsection{\texorpdfstring{$G$}{G}-Equivariant Gradient Degree}

Let $V$ be an orthogonal $G$-representation. Denote by $C^{2}_{G}
(V,\mathbb{R})$ the space of $G$-invariant real $C^{2}$-functions on $V$. Let
$\varphi\in C^{2}_{G}(V,\mathbb{R})$ and $\Omega\subset V$ be an open bounded
invariant set such that $\nabla\varphi(x)\not =0$ for $x\in\partial\Omega$. In
such a case, the pair $(\nabla\varphi,\Omega)$ is called \textit{$G$-gradient
$\Omega$-admissible}. Denote by $\mathcal{M}^{G}_{\nabla}(V,V)$ the set of all
$G $-gradient $\Omega$-admissible pairs in $\mathcal{M}^{G}(V,V)$ and put
$\mathcal{M}^{G}_{\nabla}:=\bigcup_{V}\mathcal{M}^{G}_{\nabla}(V,V)$. 

\begin{theorem}
\label{thm:Ggrad-properties} There exists a unique map
$\nabla_{G}\mbox{\rm -}\deg:\mathcal{M}_{\nabla}^{G}\to U(G)$, which assigns
to every $(\nabla\varphi,\Omega)\in\mathcal{M}^{G}_{\nabla}$ an element
$\nabla_{G}\mbox{\rm -}\deg(\nabla\varphi,\Omega)\in U(G)$, called the
\textit{$G$-gradient degree} of $\nabla\varphi$ on $\Omega$,
\begin{equation}
\label{eq:grad-deg}\nabla_{G}\mbox{\rm -}\deg(\nabla\varphi,\Omega)=
\sum_{(H_{i})\in\Phi(\Gamma)}n_{H_{i}}(H_{i})= n_{H_{1}}(H_{1})+\dots
+n_{H_{m}}(H_{m}),
\end{equation}
satisfying the following properties:

\begin{enumerate}

\item \textrm{\textbf{(Existence)}} If $\nabla_{G}\mbox{\rm -}\deg
(\nabla\varphi,\Omega)\not =0$, i.e., \eqref{eq:grad-deg} contains a non-zero
coefficient $n_{H_{i}}$, then $\exists_{x\in\Omega}$ such that $\nabla
\varphi(x)=0$ and $(G_{x})\geq(H_{i})$.

\item \textrm{\textbf{(Additivity)}} Let $\Omega_{1}$ and $\Omega_{2}$ be two
disjoint open $G$-invariant subsets of $\Omega$ such that $(\nabla
\varphi)^{-1}(0)\cap\Omega\subset\Omega_{1}\cup\Omega_{2}$. Then,
\[
\nabla_{G}\mbox{\rm -}\deg(\nabla\varphi,\Omega)= \nabla_{G}\mbox{\rm -}\deg
(\nabla\varphi,\Omega_{1})+ \nabla_{G}\mbox{\rm -}\deg(\nabla\varphi
,\Omega_{2}).
\]

\item \textrm{\textbf{(Homotopy)}} If $\nabla_{v}\Psi:[0,1]\times V\to V$ is a
$G$-gradient $\Omega$-admissible homotopy, then
\[
\nabla_{G}\mbox{\rm -}\deg(\nabla_{v}\Psi(t,\cdot),\Omega)=\mathrm{constant}.
\]

\item \textrm{\textbf{(Normalization)}} Let $\varphi\in C^{2}_{G}
(V,\mathbb{R})$ be a special $\Omega$-Morse function such that $(\nabla
\varphi)^{-1}(0)\cap\Omega=G(v_{0})$ and $G_{v_{0}}=H$. Then,
\[
\nabla_{G}\mbox{\rm -}\deg(\nabla\varphi,\Omega)= (-1)^{{m}^{-}(\nabla
^{2}\varphi(v_{0}))}\cdot(H),
\]
where ``$m^{-}(\cdot)$'' stands for the total dimension of eigenspaces for
negative eigenvalues of a (symmetric) matrix.

\item \textrm{\textbf{(Product)}} For all $(\nabla\varphi_{1},\Omega
_{1}),(\nabla\varphi_{2},\Omega_{2})\in\mathcal{M}^{G}_{\nabla}$,
\[
\nabla_{G}\mbox{\rm -}\deg(\nabla\varphi_{1}\times\nabla\varphi_{2},
\Omega_{1}\times\Omega_{2})= \nabla_{G}\mbox{\rm -}\deg(\nabla\varphi
_{1},\Omega_{1})\ast\nabla_{G}\mbox{\rm -}\deg(\nabla\varphi_{2},\Omega_{2}),
\]
where the multiplication `$\ast$' is taken in the Euler ring $U(G)$.




\item \textrm{\textbf{(Reduction Property)}} Let $V$ be an orthogonal
$G$-representation, $f:V\to V$ a $G$-gradient $\Omega$-admissible map, then
\begin{equation}
\label{eq:red-G}\pi_{0}\left[  \nabla_{G}\mbox{\rm -}\deg(f,\Omega)\right]
=G\mbox{\rm -}\deg(f,\Omega).
\end{equation}
where the ring homomorphism $\pi_{0}:U(G)\to A(G)$ is given by \eqref{eq:pi_0-homomorphism}.
\end{enumerate}
\end{theorem}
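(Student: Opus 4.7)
The plan is to convert the spectral data in Table \ref{tab:spectrum} into equivariant topological invariants by composing three ingredients: (a) the Slice Criticality Principle, which descends the computation of $\nabla_{G}\text{-deg}$ to the slice at $v^{o}$; (b) the linearization/normalization property of the gradient degree, which turns the Hessian $\mathcal L(\alpha)$ into a product of basic degrees on the isotypic components; and (c) the involution identity $\nabla\text{-deg}_{\mathcal W_{\eta}}\ast \nabla\text{-deg}_{\mathcal W_{\eta}}=(S_{k})$ (Lemma \ref{le:involutive}), which collapses the difference $\omega_{G}(\alpha_{o})$ into the compact form \eqref{alg:algorithm_invariant}. Parts (ii) and (iii) are then read off directly from the closed-form expressions for $\Lambda$ obtained in Section \ref{sec:4.1}.

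For part (i), I would first, for each $\alpha_{S^{(\eta_{o})}}\in\Lambda$, choose an isolating interval $[\alpha_{-},\alpha_{+}]$ with $[\alpha_{-},\alpha_{+}]\cap\Lambda=\{\alpha_{S^{(\eta_{o})}}\}$ and a $G$-invariant tubular neighborhood $\mathcal U$ of $G(v^{o})$ containing no other critical orbits of $\mathcal F_{\alpha_{\pm}}$; this is possible because the eigenvalues of $\mathcal L(\alpha)$ depend continuously on $\alpha$ and vanish only on $\Lambda$. Since $G_{v^{o}}\cong S_{k}$ is discrete, the tangent space to $G(v^{o})$ is trivial and the slice $S_{o}$ coincides with $V$, so the Slice Criticality Principle identifies the $G$-gradient degree on $\mathcal U$ with the $S_{k}$-gradient degree on $\mathcal U\cap S_{o}$ via the homomorphism $\Theta$. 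Next, using the isotypic decomposition \eqref{eq: decomposition} and the linearization/product properties of $\nabla_{G}\text{-deg}$, I would factor the slice degree as $\prod_{\eta}(\nabla\text{-deg}_{\mathcal W_{\eta}})^{m_{\eta}(\alpha)}$, where $m_{\eta}(\alpha)$ is the count of negative eigenvalues of $\mathcal L(\alpha)$ on the isotypic copy $\mathcal W_{\eta}\cong S^{(\eta)}$. The spectrum in Table \ref{tab:spectrum} shows that as $\alpha$ crosses $\alpha_{S^{(\eta_{o})}}$ exactly one such eigenvalue flips sign (three flip simultaneously when $\alpha_{o}=0$), so the involution identity reduces $\omega_{G}(\alpha_{S^{(\eta_{o})}})$ to \eqref{alg:algorithm_invariant}. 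The existence property of $\nabla_{G}\text{-deg}$ then converts each nonzero coefficient in $\omega_{G}(\alpha_{S^{(\eta_{o})}})$ into a branch of bifurcating critical points with at least the indicated symmetry, and the multi-crossing at $\alpha=0$ yields three independent factors $(S_{k})-\nabla\text{-deg}_{\mathcal W_{\eta}}$ for $\eta\in\{(k-1,1),(k-2,2),(k-2,1,1)\}$, giving three distinct families.

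For parts (ii) and (iii), I would carry out a direct analytic inspection of the three elements of $\Lambda$ obtained in Section \ref{sec:4.1}. Nonnegativity for $k\ge 4$ is immediate from comparing the signs of numerators and denominators of the rational expressions for $\alpha_{S^{(k-1,1)}}$ and $\alpha_{S^{(k)}}$, and width-invariance refers to the fact that these critical thresholds are intrinsic to the system rather than artifacts of $k$-dependent rescaling (see Figure \ref{fig:criticalnum}). For (iii), I would verify that both nonzero critical values exceed $1$ for every $k\ge 4$: a routine monotonicity argument shows that they increase from their $k=4$ values up to the horizontal asymptote $\alpha=2$ as $k\to\infty$. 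Consequently no element of $\Lambda$ lies in $(0,1)$, so $\mathcal L(\alpha)$ retains constant index on this interval and the gradient degree is constant there, i.e.\ the equivariant structure of $v^{o}$ is preserved.

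The principal obstacle will be step (i), specifically confirming that the maximal-orbit-type coefficients on the right-hand side of \eqref{alg:algorithm_invariant} genuinely survive the ring multiplication in $U(S_{k})$ rather than canceling against lower-order contributions from previously crossed factors. This requires the case analysis governed by Lemma \ref{le:basic_coefficient} together with the recursive formula \eqref{eq:rec-coef}: in the generic situation where a newly crossed maximal orbit type $(H)$ in $\mathcal W_{\eta_{o}}$ is incomparable with those in $\prod_{\eta<\eta_{o}}\nabla\text{-deg}_{\mathcal W_{\eta}}$, its coefficient is preserved; the delicate subcase, illustrated explicitly in the $k=5$ computation of Section \ref{sec:5}, occurs when maximal types from different isotypic components are nested and one has to track the cancellation pattern $2n_{H}+n_{H}^{2}|W(H)|=0$. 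The concrete invariants exhibited in Section \ref{sec:5} serve both as validation of the general formula \eqref{alg:algorithm_invariant} and as a template for the required bookkeeping for arbitrary $k\ge 4$.
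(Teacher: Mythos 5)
Your proposal does not address the statement it is supposed to prove. The statement in question is Theorem \ref{thm:Ggrad-properties} from Appendix \ref{app:C}: the existence and uniqueness of the map $\nabla_{G}\mbox{\rm -}\deg:\mathcal{M}_{\nabla}^{G}\to U(G)$ satisfying the axioms of existence, additivity, homotopy, normalization, product, and reduction. What you have written instead is a proof sketch of Theorem \ref{thm:main}, the paper's main bifurcation result: you invoke the Slice Criticality Principle, the spectrum of the Hessian in Table \ref{tab:spectrum}, the critical set $\Lambda$, and the product formula \eqref{alg:algorithm_invariant} — all of which are \emph{applications} of the degree whose existence Theorem \ref{thm:Ggrad-properties} asserts. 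Your argument therefore presupposes the very object it is meant to construct, and none of its steps bear on existence, uniqueness, or the verification of the listed axioms.

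A genuine proof of Theorem \ref{thm:Ggrad-properties} is a foundational construction, not a computation: one must build the degree (in Gęba's approach, by deforming an admissible gradient pair $(\nabla\varphi,\Omega)$ through $G$-gradient homotopies to a special $\Omega$-Morse function, defining the degree on non-degenerate critical orbits via the normalization formula $(-1)^{m^{-}(\nabla^{2}\varphi(v_{0}))}(H)$, and summing over orbits using additivity), then check that the result is independent of the choices made, verify each axiom, and establish uniqueness by showing that any map satisfying the axioms is determined on the generic (special Morse) representatives. The paper itself does not prove this theorem; it is quoted as a known result of K.~Gęba \cite{gkeba1997degree} with further properties referred to \cite{DK}. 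If your intent was to prove Theorem \ref{thm:main}, your outline is broadly consistent with the paper's proof of that result, but as a proof of the stated theorem it is a category error and contains no relevant content.
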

For other properties such as Functoriality, Hopf Property, Suspension, etc., one is referred to Section 6 of \cite{DK}.

\vs 

\subsection{Computations of the Gradient \texorpdfstring{$G$}{G}-Equivariant Degree}

Similarly to the case of the Brouwer degree, the gradient equivariant degree
can be computed using standard linearization techniques. Therefore, it is
important to establish computational formulae for linear gradient operators.
\vskip.3cm Let $V$ be an orthogonal (finite-dimensional) $G$-representation
and suppose that $A:V\to V$ is a $G$-equivariant symmetric isomorphism of $V$,
i.e., $A:=\nabla\varphi$, where $\varphi(x)=\frac12 Ax\bullet x$. Consider the
$G$-isotypical decomposition of $V$
\[
V=\bigoplus_{i}V_{i},\quad V_{i}\;\;\mbox{modeled on}\;\mathcal{V}_{i}.
\]
We assume here that $\{\mathcal{V}_{i}\}_{i}$ is the complete list of
irreducible $G$-representations. \vskip.3cm Let $\sigma(A)$ denote the
spectrum of $A$ and $\sigma_{-}(A):=\{\alpha\in\sigma(A): \alpha<0\}$, and
let $E_{\mu}(A)$ stands for the eigenspace of $A$ corresponding to $\mu
\in\sigma(A)$. Put $\Omega:=\{x\in V: \|x\|<1\}$. Then, $A$ is $\Omega
$-admissibly homotopic (in the class of gradient maps) to a linear operator
$A_{o}:V\to V$ such that
\[
A_{o}(v):=
\begin{cases}
-v, & \mbox{if}\;v\in E_{\mu}(A),\;\mu\in\sigma_{-}(A),\\
v, & \mbox{if}\;v\in E_{\mu}(A),\;\mu\in\sigma(A)\setminus\sigma_{-}(A).
\end{cases}
\]
In other words, $A_{o}|_{E_{\mu}(A)}=-\text{\textrm{Id}}$ for $\mu\in
\sigma_{-}(A)$ and $A_{o}|_{E_{\mu}(A)}=\text{\textrm{Id}}$ for $\mu\in
\sigma(A)\setminus\sigma_{-}(A)$. Suppose that $\mu\in\sigma_{-}(A)$. Then,
denote by $m_{i}(\mu)$ the integer
\[
m_{i}(\mu):=\dim(E_{\mu}(A)\cap V_{i})/\dim\mathcal{V}_{i},
\]
which is called the \textit{$\mathcal{V}_{i}$-multiplicity} of $\mu$. Since
$\nabla_{G}\mbox{\rm -}\deg(\text{\textrm{Id}},\mathcal{V}_{i})=(G)$ is the
unit element in $U(G)$, we immediately obtain, by product property ($\nabla
$5), the following formula
\begin{equation}
\label{eq:grad-lin}\nabla_{G}\mbox{\rm -}\deg(A,\Omega)= \prod_{\mu\in
\sigma_{-}(A)}\prod_{i} \left[  \nabla_{G}\mbox{\rm -}\deg(-\text{\textrm{Id}
},B(\mathcal{V}_{i}))\right]  ^{m_{i}(\mu)},
\end{equation}
where $B(W)$ is the unit ball in $W$. \vskip.3cm

\begin{definition} \label{de:basicequi}
\textrm{\ Assume that $\mathcal{V}_{i}$ is an irreducible $G$-representation.
Then, the $G$-equivariant gradient degree:
\[
\nabla_{G}\text{-}\deg_{\mathcal{V}_{i}}:=\nabla_{G}\mbox{\rm -}\deg(-\text{\text{\rm Id}}
,B(\mathcal{V}_{i}))\in U(G)
\]
is called the \textit{gradient $G$-equivariant basic degree} for
$\mathcal{V}_{i}$. }
\end{definition}

\vskip.3cm

\begin{proposition}
\label{prop:invertible} The gradient $G$- equivariant basic degrees $\nabla_{G}\text{-}\deg_{\mathcal{V}_{i}}$ are invertible elements in $U(G)$.
\end{proposition}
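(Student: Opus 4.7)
The plan is to reduce the invertibility problem to the corresponding statement in the Burnside ring $A(G)$ via the ring homomorphism $\pi_0:U(G)\to A(G)$, and then lift it back to $U(G)$ by showing that the discrepancy is nilpotent. Write $a:=\nabla_G\text{-}\deg_{\mathcal V_i}\in U(G)$. Our goal is to produce $b\in U(G)$ with $a*b=(G)$.

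First, I would invoke the Reduction Property of the gradient degree (Theorem \ref{thm:Ggrad-properties}(6)) to get
\[
\pi_0(a)=\pi_0\bigl(\nabla_G\text{-}\deg(-\text{\rm Id},B(\mathcal V_i))\bigr)=G\text{-}\deg(-\text{\rm Id},B(\mathcal V_i))=\deg_{\mathcal V_i},
\]
where $\deg_{\mathcal V_i}\in A(G)$ is the Brouwer basic degree of Definition \ref{eq:basicdeg}. By Lemma \ref{le:involutive}, $\deg_{\mathcal V_i}$ is an involution, i.e.\ $\deg_{\mathcal V_i}\cdot\deg_{\mathcal V_i}=(G)$ in $A(G)$. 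Since $\pi_0$ is a ring homomorphism, this gives $\pi_0(a*a)=(G)$, so
\[
a*a-(G)\in\ker(\pi_0).
\]

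Next, I would analyze $\ker(\pi_0)$. By the definition of $\pi_0$ in \eqref{eq:pi_0-homomorphism}, its kernel is the $\mathbb Z$-submodule generated by those $(H)\in\Phi(G)$ with $\dim W(H)>0$. By Proposition \ref{pro:nilp-elements}(i) every such generator is nilpotent, and by its corollary any finite $\mathbb Z$-linear combination of such generators is nilpotent as well. Hence $\nu:=a*a-(G)$ is nilpotent, say $\nu^m=0$.

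Finally, I would invert $a*a=(G)+\nu$ via the finite geometric series
\[
\bigl((G)+\nu\bigr)*\sum_{j=0}^{m-1}(-1)^j\nu^j=(G)+(-1)^{m-1}\nu^m=(G),
\]
which exhibits $a*a$ as a unit in $U(G)$ with explicit inverse $c:=\sum_{j=0}^{m-1}(-1)^j\nu^j$. Setting $b:=a*c$ gives $a*b=a*a*c=(G)$, so $a$ is right-invertible, and by commutativity of $U(G)$ it is two-sided invertible. The only real obstacle is the kernel analysis, which is handled cleanly by Proposition \ref{pro:nilp-elements} together with its corollary; once nilpotency of $\nu$ is secured, the geometric-series inversion is immediate.
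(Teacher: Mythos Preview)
Your proof is correct and follows essentially the same route as the paper's: reduce to $A(G)$ via $\pi_0$, use the involutive property of the Brouwer basic degree to conclude that $a*a-(G)\in\ker\pi_0$ is nilpotent, and then invert $a*a$ by a finite geometric series. The only cosmetic difference is that the paper writes $a*a=(G)-\alpha$ and sums $\sum_{n\ge0}\alpha^n$, whereas you write $a*a=(G)+\nu$ and use the alternating series; your version is slightly more explicit about invoking the Reduction Property and about the final step $b:=a*c$.
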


\begin{proof}
Let $a:=\pi_{0}(\nabla_G\text{\textrm{-deg\,}}_{\cV_{i}})$, then $a^{2}=(G)$ in $A(G)$ (see Lemma \eqref{le:involutive}),
which implies that $(\nabla_{G}\text{\textrm{-deg\,}}_{\mathcal{V}_{i}})^{2}=(G)-\alpha$,
where for every $(H)\in\Phi_{0}(G)$ one has coeff$^{H}(\alpha)=0$. It is
sufficient to show that $(G)-\alpha$ is invertible in $U(G)$. Since (by
Proposition \ref{pro:nilp-elements}) for sufficiently large $n\in{\mathbb{N}}%
$, $\alpha^{n}=0$, one has
\[
\big((G)-\alpha)\sum_{n=0}^{\infty}\alpha^{n}=\sum_{n=0}^{\infty}\alpha
^{n}-\sum_{n=1}^{\infty}\alpha^{n}=(G),
\]
where $\alpha^{0}=(G)$
\end{proof}
\vskip.3cm \noindent\textbf{Degree on the Slice:} Let $\mathscr H$ be a
Hilbert $G$-representation. Suppose that the orbit $G(u_{o})$ of $u_{o}
\in\mathscr H$ is contained in a finite-dimensional $G$-invariant subspace, so
the $G$-action on that subspace is smooth and $G(u_{o})$ is a smooth
submanifold of $\mathscr H$. In such a case we call the orbit $G(u_{o})$
\textit{finite-dimensional}. Denote by $S_{o}\subset\mathscr H$ the slice to
the orbit $G(u_{o})$ at $u_{o}$. Denote by $V_{o}:=\tau_{u_{o}}G(u_{o})$ the
tangent space to $G(u_{o})$ at $u_{o}$. Then $S_{o}=V_{o}^{\perp}$ and $S_{o}
$ is a smooth Hilbert $G_{u_{o}}$-representation.

Then we have (cf. \cite{BeKr}):

\begin{theorem}
\textrm{(Slice Principle)} \label{thm:SCP} Let $\mathscr{H}$ be a Hilbert
$G$-representation, $\Omega$ an open $G$-invariant subset in $\mathscr H$, and
$\varphi:\Omega\rightarrow\mathbb{R}$ a continuously differentiable
$G$-invariant functional such that $\nabla\varphi$ is a completely continuous
field. Suppose that $u_{o}\in\Omega$ and $G(u_{o})$ is an finite-dimensional
isolated critical orbit of $\varphi$ with $S_{o}$ being the slice to the orbit
$G(u_{o})$ at $u_{o}$, and $\mathcal{U}$ an isolated tubular neighborhood of
$G(u_{o})$. Put $\varphi_{o}:S_{o}\rightarrow\mathbb{R}$ by $\varphi
_{o}(v):=\varphi(u_{o}+v)$, $v\in S_{o}$. Then
\begin{equation}
\nabla_{G}\text{\text{\rm-deg\,}}(\nabla\varphi,\mathcal{U})=\Theta
(\nabla_{G_{u_{o}}}\text{\text{\rm-deg\,}}(\nabla\varphi_{o},\mathcal{U}\cap
S_{o})), \label{eq:SDP}%
\end{equation}
where $\Theta:U(G_{u_{o}})\rightarrow U(G)$ is homomorphism defined on
generators $\Theta(H)=(H)$, $(H)\in\Phi(G_{u_{o}})$.
\end{theorem}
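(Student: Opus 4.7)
The plan is to convert the degree computation on the tubular neighborhood of the orbit $G(u_o)$ into a degree computation on the slice $S_o$, using (a) the equivariant tubular neighborhood theorem, (b) the multiplicativity (product) and homotopy properties of $\nabla_{G}\text{-deg}$ collected in Theorem \ref{thm:Ggrad-properties}, and (c) an induction identity between the Euler rings $U(G_{u_o})$ and $U(G)$ that manifests as the homomorphism $\Theta$.

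First, I would invoke the equivariant tubular neighborhood theorem to identify the isolated tubular neighborhood $\mathcal{U}$ of $G(u_o)$ with the twisted product bundle $G \times_{G_{u_o}} B$, where $B := \mathcal{U} \cap S_o$ is an open $G_{u_o}$-invariant ball around the origin of the slice and the $G$-action is by left translation on the first factor. Under this identification, the $G$-invariance of $\varphi$ forces the pulled-back potential to depend only on the slice coordinate: $[g, v] \mapsto \varphi_o(v)$. Consequently, the transported gradient vector field has vanishing orbit-tangential component on the zero section and slice component equal to $\nabla \varphi_o(v)$ at each fiber; moreover, isolation of the critical orbit translates to $0 \in B$ being an isolated zero of $\nabla \varphi_o$.

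Second, after restricting to $\mathcal{U}$ by the Additivity property, I would construct an admissible $G$-equivariant gradient homotopy that straightens the transported gradient into the direct sum of (i) an isomorphism on the orbit-direction subbundle (i.e.\ on the tangent bundle of $G(u_o)$) and (ii) $\nabla \varphi_o$ on the slice fibers. Isolation of the critical orbit guarantees admissibility throughout the deformation. Applying the Product property of the equivariant gradient degree, the total degree factors as
\[
\nabla_{G}\text{-deg}(\nabla\varphi, \mathcal{U}) = \nabla_{G}\text{-deg}\bigl(\mathrm{Id}_{\tau G(u_o)}, \Omega_{\tau}\bigr) \ast \bigl[\text{induced slice degree}\bigr],
\]
where $\Omega_{\tau}$ is a small $G$-invariant ball in the tangent bundle; the first factor reduces to $(G)$, the unit in $U(G)$, by Normalization applied to the identity map.

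Third, I would identify the remaining "induced slice degree" with $\Theta\bigl(\nabla_{G_{u_o}}\text{-deg}(\nabla\varphi_o, \mathcal{U}\cap S_o)\bigr)$. Expanding $\nabla_{G_{u_o}}\text{-deg}(\nabla\varphi_o, \mathcal{U}\cap S_o) = \sum_i n_{H_i}(H_i)$ in $U(G_{u_o})$, each critical point in the fiber $B$ with $G_{u_o}$-isotropy $H_i$ lifts to a critical $G$-orbit in the twisted product whose $G$-isotropy agrees with $H_i$ up to $G$-conjugacy. A direct Euler-characteristic computation on the fixed point sets $\bigl(G \times_{G_{u_o}} G_{u_o}/H_i\bigr)_{L}$, exploiting the fibration $G \times_{G_{u_o}} G_{u_o}/H_i \to G/G_{u_o}$ with fiber $G_{u_o}/H_i$, confirms that the induction map on Euler rings sends generators $(H_i) \in \Phi(G_{u_o})$ to $(H_i) \in \Phi(G)$, matching the prescribed definition of $\Theta$.

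The main obstacle I anticipate is the rigorous construction of the admissible gradient homotopy in Step 2 decoupling orbit and slice directions while preserving both the gradient character and $G$-equivariance. The natural candidate augments $\varphi$ by a quadratic term $\tfrac{1}{2}\|\xi\|^2$ in the orbit variable after lifting to the normal bundle via a $G$-invariant exponential map, but this requires a careful choice of $G$-invariant Riemannian metric on $\mathscr{H}$ near $G(u_o)$, verification that the lifted potential descends well to the quotient $G \times_{G_{u_o}} B$, and confirmation of $\Omega$-admissibility throughout the homotopy using the isolation hypothesis on the critical orbit together with the complete continuity of $\nabla\varphi$ (so that the approximation scheme used to define $\nabla_{G}\text{-deg}$ in the Hilbert setting is compatible with the geometric construction). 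Once this technical lifting is in place, the Product formula combined with the Euler-ring induction identification above yields identity \eqref{eq:SDP}.
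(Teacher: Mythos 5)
The paper does not actually prove Theorem \ref{thm:SCP}: it is imported as a known result (``cf.~\cite{BeKr}'') and used as a black box in the proof of Theorem \ref{thm:main}. So there is no in-paper argument to compare against; I can only assess your proposal on its own terms.

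Your outline follows the standard proof strategy for slice principles (equivariant tubular neighborhood $\mathcal U\cong G\times_{G_{u_o}}B$, the observation that $G$-invariance kills the orbit-tangential component of the gradient, and the identification of the induction map on orbit types via $G\times_{G_{u_o}}G_{u_o}/H_i\cong G/H_i$), and the final Euler-characteristic bookkeeping in your Step~3 is exactly what makes $\Theta(H)=(H)$ come out. One step, however, does not work as written: in Step~2 you invoke the \textbf{Product property} to factor the degree over $\mathcal U$. The Product property in Theorem \ref{thm:Ggrad-properties} applies to a genuine product $\Omega_1\times\Omega_2$ of $G$-invariant sets in a direct sum of $G$-representations with the diagonal action, whereas $G\times_{G_{u_o}}B$ is a twisted product (a fiber bundle over $G/G_{u_o}$), not a product of $G$-representations, and there is no ``orbit-direction $G$-representation'' on which the identity factor would live. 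The cleaner and standard way to close this gap is to bypass the product argument entirely: use a gradient $\Omega$-admissible homotopy and Additivity to reduce $\varphi_o$ to a special $\Omega$-Morse function on the slice, note that each nondegenerate critical $G_{u_o}$-orbit $G_{u_o}(v)\subset B$ lifts to a nondegenerate critical $G$-orbit in $\mathcal U$ with the same isotropy $G_{u_o+v}=(G_{u_o})_v$ and the same Morse index on the normal directions, and then apply the Normalization property on both sides; summing over the critical orbits by Additivity gives \eqref{eq:SDP} directly. Your closing remarks correctly identify the remaining technical burden in the Hilbert-space setting (compatibility of the tubular neighborhood construction with the $G$-equivariant approximation scheme used to define the degree for completely continuous fields), but that part is left as a plan rather than carried out.
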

\newpage
\bibliographystyle{IEEEtran}
\bibliography{mybib_minima.bib}

\end{document}